\documentclass[11pt,reqno]{amsart}
\usepackage[usenames]{color}
\usepackage{amsmath,pdfsync,verbatim,graphicx,epstopdf,enumerate}
\usepackage{ulem}
\pretolerance=4000
\setlength{\topmargin}{-.25in}
\setlength{\textheight}{9in}
\setlength{\textwidth}{7in}
\setlength{\headheight}{26pt}
\setlength{\headsep}{2pt}
\setlength{\oddsidemargin}{-0.25in}
\setlength{\evensidemargin}{-0.25in}

\newtheorem{theorem}{Theorem}[section]

\newtheorem{lemma}[theorem]{Lemma}
\newtheorem{proposition}[theorem]{Proposition}

\theoremstyle{definition}

\usepackage{amsmath,amssymb,verbatim,amsthm}
\usepackage{moreverb}
\usepackage{amsmath,amstext,amssymb,latexsym,amscd,epsfig,amsthm}
\usepackage{epsfig,graphicx,graphics,subfig,pdfpages,pifont}
\graphicspath{{./figures/}}
\usepackage{epstopdf,bm}
\usepackage{fontenc}

\usepackage{setspace}
\doublespace

\newcommand{\D}{\mathrm{d}}

\newcommand{\lb}{\left(}

\newcommand{\vp}{\varphi}
\newcommand{\ve}{\varepsilon}

\newcommand{\rb}{\right)}
\newcommand{\PD}{\partial}

\newcommand{\wt}{\widetilde}

\newcommand{\Oc}{\mathcal{O}}

\newcommand{\Rb}{\mathbb{R}}
\newcommand{\Sb}{\mathbb{S}}

\newcommand{\Beq}{\begin{equation}}
\newcommand{\Eeq}{\end{equation}}
\newcommand{\beq}{\begin{equation*}}
\newcommand{\eeq}{\end{equation*}}
\newcommand{\bal}{\begin{align}}
\newcommand{\eal}{\end{align}}
\renewcommand{\O}{\Omega}

\newcommand{\n}{\nabla}

\newcommand{\A}{\alpha}
\newcommand{\B}{\beta}
\newcommand{\bp}{\begin{prob}}
\newcommand{\ep}{\end{prob}}
\newcommand{\bpr}{\begin{proof}}
\newcommand{\epr}{\end{proof}}

\definecolor{wine}{RGB}{165,0,33}
\definecolor{greelue}{RGB}{0,180,180}

\newcommand{\bel}[1]{\begin{equation}\label{#1}}
\newcommand{\ee}{\end{equation}}

\title{Image reconstruction from radially incomplete spherical Radon data}
\author[Ambartsoumian, Gouia-Zarrad, Krishnan and Roy]{Gaik Ambartsoumian$^\ast$, Rim Gouia-Zarrad$^\dagger$, Venkateswaran P. Krishnan$^\diamond$ and Souvik Roy$^\sharp$}
\address{$^{\ast}$Corresponding author, Department of Mathematics, University of Texas at Arlington, USA \newline\indent\:  E-mail:{\tt gambarts@uta.edu}
\newline
\indent $^{\dagger}$ Department of Mathematics and Statistics, American University of Sharjah, UAE, E-mail:{\tt rgouia@aus.edu}
\newline
\indent $^{\diamond}$Tata Institute of Fundamental Research - Centre for Applicable Mathematics, Bangalore, India
\newline
\indent\: E-mail:{\tt vkrishnan@math.tifrbng.res.in}
\newline
\indent $^{\sharp}$Department of Mathematics, University of W\"urzburg, Germany
 E-mail:{\tt souvik.roy@mathematik.uni-wuerzburg.de}}
\begin{document}


\begin{abstract}
We study inversion of the spherical Radon transform with centers on a sphere (the data acquisition set). Such inversions are essential in various image reconstruction problems arising in medical, radar and sonar imaging. In the case of radially incomplete data, we show that the spherical Radon transform can be uniquely inverted recovering the image function in spherical shells. Our result is valid when the support of the image function is inside the data acquisition sphere, outside that sphere, as well as on both sides of the sphere. Furthermore, in addition to the uniqueness result our method of proof provides reconstruction formulas for all those cases. We present a robust computational algorithm and demonstrate its accuracy and efficiency on several numerical examples.
\end{abstract}
\maketitle


\section{Introduction}
The spherical Radon transform (SRT) maps a function of $n$ variables to its integrals over a family of spheres in $\mathbb{R}^n$. Such transforms naturally appear in mathematical models of various imaging modalities in medicine \cite{AZSZP, HKN, Kuchment-Kunyansky_2008, Mensah-Franceschini, Norton-Circular, Norton-Linzer, RKCV,Stefanov-Uhlmann-TAT,Stefanov-Uhlmann-brain-imaging, Xu-Wang}, geophysical applications \cite{deHoop,Louis-Quinto}, radar \cite{Cheney-Borden-Book}, as well as in some purely mathematical problems of approximation theory \cite{ABK-Spherical-Lp, Agranovsky-Quinto, Lin-Pinkus}, PDEs \cite{ABK-Spherical-Lp, AKQ-Spherical,FHR-Spherical-Even,FPR-Spherical-Odd,Finch-Rakesh,John-Book,Kunyansky-Spherical-1,Kunyansky-Spherical-2} and integral geometry \cite{AGL-Circular,Ambartsoumian-Krishnan, AK1,AK2,Andersson_1988,AER,GGG-Book,Haltmeier,Nguyen, Rubin,Salman}.

One of the most important questions related to SRT is the possibility of its stable inversion. Since the family of all spheres in $\mathbb{R}^n$ has $n+1$ dimensions, the problem of inversion from the set of integrals along all spheres is overdetermined. Hence it is customary to consider the problem of inverting the SRT from the restriction of the full set of integrals to an $n$-dimensional subset. While one can come up with several different choices of such subsets, a common approach (especially in imaging applications) is to restrict the centers of integration spheres to a hypersurface in $\mathbb{R}^n$.

For example, a simple model of thermoacoustic tomography (TAT) can be described as follows. A biological object under investigation is irradiated with a short pulse of electromagnetic waves. Certain part of that radiation gets absorbed in the body heating up the tissue leading to its thermoelastic expansion. The latter generates ultrasound waves, which propagate through the body and are registered by transducers placed on its surface. Under a simplifying assumption of constant speed $c$ of ultrasound waves in the tissue, at any moment of time $t$, a single transducer records a superposition of signals generated at locations that are at the fixed distance $ct$ from the transducer. In other words, the transducer measurements can be modeled as  integrals of a function along spheres centered at the transducer location and of different radii (depending on time). By moving the transducer around the surface of the object (or equivalently using an array of such transducers) one can essentially measure a 3-dimensional family of spherical integrals of the unknown image function. Hence to recover the image in this simple TAT model, one would need to invert the SRT in the setup described above. Similar mathematical problems arise also in various models of ultrasound reflection tomography, as well as in sonar and radar imaging.

While our work is motivated by its potential applications in imaging problems,    we study the spherical Radon transform in $\Rb^{n}$ for any $n\geq 3$.  We discuss the inversion of SRT from integrals of a function $f$ along spheres whose centers lie on the surface of the unit (data acquisition) sphere\footnote{Our results carry over with little difficulty when the centers of the SRT data lie on a sphere of radius $R$.}.  With the additional restriction on the set of radii of integration spheres, we prove the uniqueness as well as derive reconstruction formulas for $f$ from such data.  We provide several results that hold for the cases when the support of $f$ is inside, outside, or on both sides of the unit sphere. More precisely, for the case when the support of a function $f$ is inside the unit sphere, our result shows that in order to reconstruct $f$ in the spherical shell
$\{r<|x|<1\}$ for any $r<1$, we only need SRT data with centers on the unit sphere and for all radii $\rho$ such that $0<\rho<1-r$. Analogous statements can be made for the case when the support of $f$ is outside or on both sides of the unit sphere. In connection with this, we mention the result \cite[Theorem 5]{FPR-Spherical-Odd}, where it was shown that for a bounded open connected set $D$ in $\Rb^{n}$ for $n$ odd, a function $f$ supported in $\overline{D}$ can be reconstructed from SRT data with centers on $\PD D$ and all radii $\rho$ such that $\rho\in[0,\mbox{diam}(D)/2]$. One of the consequences of our work is a generalization of this result for the case of even dimensions, as well as when the support of the function lies inside, outside or on both sides for the case when $D$ is a sphere. We emphasize here that the uniqueness result [17, Theorem 5] was already generalized for variable sound speeds in \cite{Stefanov-Uhlmann-TAT} in all space dimensions; see Prop. 2 in that paper. If one is interested in uniqueness results alone, unique continuation arguments as in \cite{Stefanov-Uhlmann-TAT}  or analytic microlocal analysis methods  as in \cite{Agranovsky-Quinto-Duke, Quinto-Spherical-2} could be used\footnote{We thank Plamen Stefanov for bringing this as well as the result stated in the previous sentence to our attention.}, although to the best of our knowledge, even for the case of spherical acquisition surface and for functions supported outside or on both sides of the sphere, such results have not been published.  The advantage of our work, in the specific setting where the acquisition geometry is the unit sphere, is that it provides in addition to uniqueness results, inversion formulas using radially partial data.

The paper is organized as follows. The main results are stated in Section \ref{main_results} and the proofs are presented in Section \ref{proofs}. In Section \ref{3d} we write down the inversion formulas for the special case of $n=3$. In Section \ref{numerical_algorithms} we discuss the numerical algorithm based on the product integration method. In Section \ref{num_res}, we provide numerical examples illustrating the accuracy and efficiency of the proposed inversion algorithms.

\section{Main results}\label{main_results}
We consider the usual spherical coordinate system:
\begin{align*}
& x_{1}=r\cos\vp_{1}\\
&x_{2}=r\sin\vp_{1}\cos\vp_{2}\\
&x_{3}=r\sin\vp_{1}\sin\vp_{2}\cos\vp_{3}\\
& \vdots\\
&x_{n-1}=r\sin\vp_{1}\sin\vp_{2}\cdots\sin\vp_{n-2}\cos\vp_{n-1}\\
&x_{n}=r\sin\vp_{1}\sin\vp_{2}\cdots\sin\vp_{n-2}\sin\vp_{n-1},
\end{align*}
where $0\leq \vp_{i}\leq \pi$ for $1\leq i\leq n-2$ and $0\leq \vp_{n-1}\leq 2\pi$. For simplicity, from now on, we will denote $\vp=(\vp_{1},\cdots,\vp_{n-1})$.
Let us consider the unit sphere centered at the origin in $\Rb^{n}$ and fix an arbitrary point $C$ on this sphere. We will denote $C$ in the above spherical coordinates by $\A$, where $\A=(\A_{1},\cdots,\A_{n-1})$.  The Cartesian coordinates of the point $C$ will then be
\[
(\cos \A_{1},\sin \A_{1}\cos \A_{2},\cdots, \sin\A_{1}\sin\A_{2}\cdots\sin\A_{n-2}\cos \A_{n-1},\sin\A_{1}\sin\A_{2}\cdots\sin\A_{n-2}\cos \A_{n-1}).
\]

Let $f:\mathbb{R}^{n}\to \mathbb{R}$ be a continuous function of compact support. Consider a sphere $S(\rho,\A)$ of radius $\rho$ centered at $C$. The spherical Radon transform of $f$ along the sphere $S(\rho,\A)$  for $\rho>0$ and $\A=(\A_{1},\cdots,\A_{n-2},\A_{n-1})\in [0,\pi]\times \cdots\times[0,\pi]\times[0,2\pi]$ is defined as

\begin{equation}\label{spher-transf-def}
Rf(\rho,\A)=g(\rho,\A)=\int_{S(\rho,\A)} f\, \D \Omega,
\end{equation}
where $\D \O$ is the usual surface measure on the sphere $S(\rho,\A)$.

 Finally, let us denote by $A(R_{1},R_{2})$ the spherical shell lying between the spheres of radii $R_1$ and $R_2$ centered at the origin. Expressing in Cartesian coordinates:
\[
A(R_{1},R_{2})=\{x\in \Rb^{n}: R_{1}<|x|<R_{2}\}.
\]

Note that this spherical shell can also be expressed in spherical coordinates by
\[
A(R_{1},R_{2})=\{(r,\vp): R_{1}<r<R_{2}, 0\leq \vp_{i}\leq \pi \mbox{ for } 1\leq i\leq n-2 \mbox{ and } 0\leq \vp_{n-1}\leq 2\pi\}.
\]

We now state the main results.

\begin{theorem}[Exterior support]\label{thm:ext}
Let $f(r,\vp)$ be a $C^{\infty}$ function supported inside $A(1,\,3)$. If $Rf(\rho,\A)$ is
known for all $(\rho,\A)$ with $0<\rho<R_{1}$ where $0<R_1<2$ and $\A\in[0,\pi]\times \cdots\times [0,\pi]\times [0,2\pi]$, then $f(r,\vp)$ can be uniquely recovered in the spherical shell $A(1,\,1+R_1)$ with an iterative reconstruction procedure.
\end{theorem}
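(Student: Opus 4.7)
The plan is to reduce the inversion to a family of one-variable equations using the rotational symmetry of the problem. Since the data acquisition surface is the unit sphere and the transform $R$ intertwines with simultaneous rotations of $\A$ and of the argument of $f$, expanding
\beq
f(r,\vp)=\sum_{k,\ell}f_{k,\ell}(r)\,Y_{k}^{\ell}(\vp),\qquad g(\rho,\A)=\sum_{k,\ell}g_{k,\ell}(\rho)\,Y_{k}^{\ell}(\A),
\eeq
the transform decouples: each mode $g_{k,\ell}$ depends only on $f_{k,\ell}$, with a kernel that is independent of $\ell$ and depends on $k$ only through the spherical-harmonic degree.

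To derive this kernel, I would parametrize $x\in S(\rho,\A)$ by $x=C+\rho\omega$ with $\omega\in S^{n-1}$, write $r=|x|=\sqrt{1-2\rho\cos\theta+\rho^{2}}$ where $\theta$ is the angle between $\omega$ and $-C$, and decompose $\D\omega=\sin^{n-2}\theta\,\D\theta\,\D\sigma_{n-2}$. Applying the Funk-Hecke formula to the inner integral of $Y_{k}^{\ell}$ over the $(n-2)$-sphere at fixed $|x|=r$ produces a Gegenbauer polynomial $C_{k}^{(n-2)/2}(\gamma)\,Y_{k}^{\ell}(\A)$ with $\gamma=(r^{2}+1-\rho^{2})/(2r)$. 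Changing variables from $\theta$ to $r$ (with Jacobian $r/(\rho\sin\theta)$) and using the support condition $f\equiv 0$ on $\{r\le 1\}$ together with $\rho<R_{1}<2$, the lower limit of integration is forced to $r=1$ while the upper limit is $r=1+\rho$. This yields, for each $(k,\ell)$,
\beq
g_{k,\ell}(\rho)=\int_{1}^{1+\rho}K_{k}(\rho,r)\,f_{k,\ell}(r)\,\D r,\qquad 0<\rho<R_{1},
\eeq
with an explicit kernel $K_{k}$ built from the Gegenbauer polynomial, a power of $\sin\theta=\sqrt{1-\gamma^{2}}$, and elementary factors.

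Under the substitution $r=1+s$ this is a Volterra integral equation of the first kind on $[0,\rho]$. In odd dimensions the kernel is bounded and satisfies $K_{k}(\rho,1+\rho)\ne 0$ (at $r=1+\rho$ one has $\gamma=1$, where the Gegenbauer factor is nonzero and no vanishing power of $\sin\theta$ appears), so a single differentiation in $\rho$ converts it into a Volterra equation of the second kind, solvable by a Neumann-series iteration. In even dimensions the kernel vanishes like a half-integer power of $(1+\rho-r)$ at the diagonal, producing an Abel-type singularity which is first removed by applying a fractional-integral operator in $\rho$ before differentiating. Either way, the iterative scheme recovers $f_{k,\ell}(r)$ on $(1,\,1+R_{1})$, and summing over $(k,\ell)$ reconstructs $f$ on $A(1,\,1+R_{1})$.

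The main obstacle will be identifying and handling the exact behaviour of the kernel at the diagonal $r=1+\rho$ in a way that is uniform in the dimension $n$ and keeps track of the $k$-dependence coming from the Gegenbauer polynomials: one must exhibit the Volterra structure explicitly, split the argument according to the parity of $n$ for the appropriate desingularization step, and then verify that the mode-by-mode inversion assembles into a convergent reconstruction on $A(1,\,1+R_{1})$. The $C^{\infty}$ assumption on $f$ ensures rapid decay of $f_{k,\ell}$ in $k$, so the latter convergence is a secondary technical point once the Volterra inversion has been carried out for each mode.
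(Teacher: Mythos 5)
Your overall route is the same as the paper's: expand $f$ and $g$ in spherical harmonics, use rotational invariance to decouple the modes, apply Funk--Hecke to obtain a one-dimensional integral equation
\[
g_{k,\ell}(\rho)=\int_{1}^{1+\rho}K_{k}(\rho,r)\,f_{k,\ell}(r)\,\D r
\]
with the Gegenbauer kernel built from $\gamma=(r^{2}+1-\rho^{2})/(2r)$ and the weight $(1-\gamma^{2})^{(n-3)/2}$, shift $r\mapsto r-1$ to get a Volterra equation of the first kind on $[0,\rho]$, and invert it iteratively. The one genuine difference is how the kernel is derived: you parametrize the integration sphere directly as $x=C+\rho\omega$ and average $Y_{k}^{\ell}$ over the latitude $(n-2)$-sphere at fixed $r$, whereas the paper works in spherical coordinates centered at the origin, computes the pushed-forward surface measure $\D\O=\frac{\rho^{n-2}r^{2}}{|r-A_{1}|}\,\D\O(\vp)$ via a Jacobian lemma proved with the Cauchy--Binet formula, and then applies Funk--Hecke in the variable $A_{1}=\vec{P}\cdot\vec{C}_{1}$. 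Your derivation is shorter and reaches the same Volterra equation; the paper's buys an explicit coordinate formula for $\D\O$ that it reuses verbatim in the interior and interior/exterior cases (where the two roots $r=A_{1}\pm\sqrt{A_{1}^{2}+\rho^{2}-1}$ must be tracked separately).

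One correction to your diagonal analysis: your claim that in odd dimensions $K_{k}(\rho,1+\rho)\neq 0$ because ``no vanishing power of $\sin\theta$ appears'' is only true for $n=3$. Since $1-\gamma^{2}$ vanishes to first order in $(1+\rho-r)$ at the diagonal, the factor $(1-\gamma^{2})^{(n-3)/2}$ forces $K_{k}$ to vanish to order $(n-3)/2$ there for every $n>3$, integer order for odd $n\geq 5$ and half-integer order for even $n$; so a single differentiation in $\rho$ does not produce a second-kind equation once $n\geq 5$, and the desingularization you reserve for even dimensions (or repeated differentiation) is needed for all $n>3$. This does not break the argument --- a first-kind Volterra equation whose kernel vanishes to a fixed finite order on the diagonal with nonvanishing leading coefficient is still uniquely and constructively solvable --- but the case split should be by the order of vanishing $(n-3)/2$, not by parity alone. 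For what it is worth, the paper itself simply asserts $K_{l}(\rho,\rho)\neq 0$ together with continuity of first derivatives and cites the classical Volterra theory; that assertion is literally valid only for $n=3$, which is the case treated explicitly in its Section 4 and in all of its numerics, so on this point your proposal is actually engaging with the general-$n$ difficulty more honestly than the paper does, apart from the misstatement about odd $n\geq5$.
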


\begin{theorem}[Interior support]\label{thm:int}
Let $f(r,\vp)$ be a $C^{\infty}$ function supported inside $A(\ve,1)$. If $Rf(\rho,\A)$ is
known for all $(\rho,\A)$ with $0<\rho<1-\ve$, where $0<\ve<1$ and $\A\in[0,\pi]\times \cdots\times [0,\pi]\times [0,2\pi]$, then $f(r,\vp)$ can be uniquely recovered in the spherical shell $A(\ve, 1)$ with an iterative reconstruction procedure.
\end{theorem}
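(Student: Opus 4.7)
The plan is to reduce the SRT inversion on the spherical shell $A(\ve,1)$ to a countable family of one-dimensional Volterra integral equations---one per spherical-harmonic mode---via a Funk--Hecke decomposition, and then to solve each of these equations by Picard iteration. The structure mirrors the approach that one naturally expects for a rotationally covariant operator with data on a sphere.

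First, I would write
$$f(r,\vp) = \sum_{k,l} f_{k,l}(r)\, Y_{k,l}(\vp), \qquad Rf(\rho,\A) = \sum_{k,l} g_{k,l}(\rho)\, Y_{k,l}(\A),$$
in an orthonormal basis $\{Y_{k,l}\}$ of spherical harmonics on $S^{n-1}$. Because $R$ commutes with simultaneous rotations of $\vp$ and $\A$, the equation $Rf=g$ decouples mode by mode, reducing the problem to recovering each radial coefficient $f_{k,l}(r)$ from its corresponding $g_{k,l}(\rho)$.

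Next, for fixed $\A$ I would parametrize points of $S(\rho,\A)$ by $r=|x|$ together with their position on the latitudinal $(n-2)$-sphere determined by $\xi\cdot\A=\cos\theta=(r^{2}+1-\rho^{2})/(2r)$, where $\xi=x/|x|$. Since $\mathrm{supp}\, f\subset A(\ve,1)$ and $\rho<1-\ve$, the $r$-integration runs only over $[1-\rho,1]$. Integrating the latitudinal slice with the addition theorem (equivalently, the Funk--Hecke formula) collapses the angular factor into a Gegenbauer polynomial, yielding
$$g_{k,l}(\rho) = \int_{1-\rho}^{\,1} K_{k}(\rho,r)\, f_{k,l}(r)\, dr,$$
with an explicit kernel $K_{k}(\rho,r)$ built from the Jacobian $r/\rho$, a power $(r\sin\theta)^{n-2}$ from the surface measure on the latitude, and $C_{k}^{(n-2)/2}(\cos\theta)$. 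Setting $s=1-\rho$ and $t=1-r$ puts this into Volterra form $\tilde g_{k,l}(s)=\int_{0}^{s}\tilde K_{k}(s,t)\tilde f_{k,l}(t)\,dt$ on $0<s<1-\ve$.

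Finally, I would address the endpoint behavior of $\tilde K_{k}$: since $\sin\theta$ vanishes on the diagonal $t=s$, the equation is first-kind with an Abel-type singularity. Differentiating once in $s$ and, depending on the parity of $n$, performing a fractional Abel inversion should convert it into a second-kind Volterra equation with a bounded continuous kernel on $\{0\le t\le s\le 1-\ve\}$; the usual Neumann series then provides existence, uniqueness, and an iterative formula for $\tilde f_{k,l}$. Reassembling the spherical-harmonic series recovers $f$ on $A(\ve,1)$. The main obstacle I anticipate is precisely this endpoint analysis: verifying that after differentiation/Abel-inversion the resulting kernel is genuinely regular, and that the resulting estimates are controlled well enough in the harmonic degree $k$ to legitimize termwise reconstruction---the Gegenbauer evaluations and the powers of $\sin\theta$ near $r=1-\rho$ are where the real bookkeeping happens.
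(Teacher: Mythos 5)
Your proposal follows essentially the same route as the paper: spherical-harmonic expansion of $f$ and $g$, diagonalization by rotation invariance, the Funk--Hecke theorem to collapse the angular integral into a Gegenbauer polynomial, a substitution taking the radial range $[1-\rho,1]$ to $[0,\rho]$, and solution of the resulting first-kind Volterra equation mode by mode by passing to second kind and iterating. The one point of genuine divergence is the diagonal behavior of the kernel, and there your version is the more careful one. The paper's interior kernel is
\begin{equation*}
K_l(\rho,r)=\frac{\rho^{n-2}\lvert\Sb^{n-2}\rvert}{C_l^{\frac n2-1}(1)}\,(1-r)\,C_l^{\frac n2-1}(u)\,\bigl(1-u^2\bigr)^{\frac{n-3}{2}},\qquad u=\frac{r^2-2r-\rho^2+2}{2-2r},
\end{equation*}
and on the diagonal $r=\rho$ one gets $u=1$, so $1-u^2$ vanishes to first order in $\rho-r$ and $K_l(\rho,\rho)=0$ whenever $n>3$; this is exactly your observation that $\sin\theta\to 0$ at the endpoint $r=1-\rho$. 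The paper nonetheless asserts $K_l(\rho,\rho)\neq 0$ and reduces to second kind by a single differentiation, which is valid only for $n=3$ (the case used in its Section 4 formulas and numerics, where the exponent $\frac{n-3}{2}$ is zero and $K_l(\rho,\rho)=2\pi\rho(1-\rho)\neq 0$ on $(0,1-\ve)$). Your proposed remedy --- differentiate and, for $n>3$, apply an Abel-type fractional inversion to strip off the factor behaving like $(\rho-r)^{\frac{n-3}{2}}$ before invoking the Neumann series --- is the correct fix, and carrying out that endpoint bookkeeping (together with uniformity of the resulting bounds in the harmonic degree $l$, which the paper also leaves implicit when reassembling the series) is what is needed to make the argument complete for general $n$. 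One small terminological point: the kernel vanishes on the diagonal rather than blowing up, so the first-kind equation is degenerate rather than singular, but Abel techniques are the right tool either way.
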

\begin{theorem}[Interior and exterior support]\label{thm:int-ext}
Let $f(r,\varphi)$ be a $C^{\infty}$ function supported inside the ball  $B(0,R_{2})$ centered at the origin and of radius $R_{2}>2$. Define $R_{1}=R_{2}-2$. If $Rf(\rho,\alpha)$ is
known for all $\rho$ with $R_{2}-1<\rho<R_{2}+1$ and $\A\in[0,\pi]\times \cdots\times [0,\pi]\times [0,2\pi]$, then $f(r,\varphi)$ can be uniquely recovered in the spherical shell $A(R_{1},R_{2})$ with an iterative reconstruction procedure.
\end{theorem}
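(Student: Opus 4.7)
The plan is to adapt the spherical-harmonic/Volterra-integral-equation approach underlying the proofs of Theorems \ref{thm:ext} and \ref{thm:int}. Decompose
\[
f(r,\vp) = \sum_k f_k(r)\, Y_k(\vp), \qquad g(\rho,\A) = \sum_k g_k(\rho)\, Y_k(\A),
\]
where $\{Y_k\}$ is an orthonormal basis of spherical harmonics on $S^{n-1}$. Fix a center $C$ on the unit sphere; by rotational invariance we may take $C=(1,0,\dots,0)$. Parameterize $S(\rho,C)$ by $y=C+\rho\,\omega$ with $\omega\in S^{n-1}$, and switch from the co-latitude angle $\theta$ (with $C\cdot\omega=\cos\theta$) to $r=|y|=\sqrt{1+2\rho\cos\theta+\rho^2}$. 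Via the Funk--Hecke formula for $Y_k$ together with the Jacobian of this change of variables, the SRT relation on the $k$-th mode reduces to
\[
g_k(\rho) = \int_{\rho-1}^{\rho+1} K_k(\rho,r)\, f_k(r)\, dr,
\]
where $K_k(\rho,r)$ is an explicit product of powers of $r$ and $\rho$ with a Gegenbauer polynomial evaluated at $(r^2-1-\rho^2)/(2\rho)$.

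The second step exploits the geometric constraints. Since $f$ is supported in $B(0,R_2)$ and $\rho\in(R_2-1,R_2+1)$ with $R_2>2$, one has $\rho-1>R_1>0$ and $\rho+1>R_2$, so the effective range of integration collapses to $r\in(\rho-1,R_2)$. Introduce the substitutions $u=R_2+1-\rho\in(0,2)$ and $s=R_2-r\in(0,2)$, together with $\tilde g_k(u)=g_k(R_2+1-u)$, $\tilde f_k(s)=f_k(R_2-s)$, and $\tilde K_k(u,s)=K_k(R_2+1-u,R_2-s)$. The relation becomes the Volterra integral equation of the first kind
\[
\tilde g_k(u) = \int_0^u \tilde K_k(u,s)\, \tilde f_k(s)\, ds, \qquad u\in(0,2).
\]
The diagonal $s=u$ corresponds to $S(\rho,C)$ being tangent to the radial sphere $\{|x|=r\}$ at the point closest to the origin, so $\tilde K_k$ is expected either to vanish or to carry a weak integrable singularity along this diagonal.

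In the third step, differentiating once in $u$ (or absorbing the boundary factor through a further substitution) should convert the first-kind Volterra equation into one of the second kind with a continuous or weakly singular kernel on the triangle $\{0\le s\le u\le 2\}$. Standard Picard iteration then delivers a unique solution $\tilde f_k$ and, simultaneously, a constructive iterative reconstruction. Reassembling $\sum_k f_k(r)\,Y_k(\vp)$ recovers $f$ throughout $A(R_1,R_2)$, which is exactly the annulus that the integration spheres can reach.

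The main obstacle will be the asymptotic analysis of $K_k$ near $r=\rho-1$, i.e.\ where $\cos\theta=-1$ and $\{|x|=r\}$ is internally tangent to $S(\rho,C)$. One must use the endpoint behavior of Gegenbauer polynomials at $-1$ to pin down the precise power of $(u-s)$ governing the kernel near the diagonal, verify that the coefficient multiplying $\tilde f_k(u)$ after differentiation does not vanish, and confirm that the transformed kernel satisfies the hypotheses of the classical Volterra inversion theorem. A secondary technical point is ensuring that the estimates are uniform in the harmonic index $k$, so that the term-by-term inversion assembles back into a $C^\infty$ function on the shell.
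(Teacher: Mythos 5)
Your proposal follows essentially the same route as the paper: spherical harmonics diagonalize the transform, Funk--Hecke plus the Jacobian of the radial change of variables yields a one-dimensional kernel relation $g_l(\rho)=\int_{\rho-1}^{R_2}K_l(\rho,r)f_l(r)\,\D r$, and the substitutions $\hat\rho=R_2+1-\rho$, $s=R_2-r$ produce a Volterra equation of the first kind on $(0,2)$ that is reduced to the second kind and solved by Picard iteration. Your identification of the diagonal $s=u$ with internal tangency at $r=\rho-1$ is also correct, and your caution there is, if anything, more careful than the paper, which simply asserts $K_l(\hat\rho,\hat\rho)\neq 0$ (true for $n=3$, where the weight $(1-x^2)^{(n-3)/2}$ is trivial and the Legendre factor is $P_l(-1)=(-1)^l$).

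The one concrete error is the argument of the Gegenbauer polynomial. You parameterize $S(\rho,C)$ by $y=C+\rho\omega$ and assert a kernel containing $C_l^{n/2-1}\bigl((r^2-1-\rho^2)/(2\rho)\bigr)$, i.e.\ the cosine of the angle at the center $C$. But Funk--Hecke cannot be applied in the $\omega$ variable, because $Y_l(y/|y|)$ is not a spherical harmonic in $\omega$; one must first pass to the direction variable $\vp=y/|y|$ seen from the origin (this is what the paper's Proposition \ref{prop:Transformation} accomplishes), after which the radial function depends on $\sigma\cdot\vec{C}_1$ and Funk--Hecke produces the Gegenbauer polynomial evaluated at the cosine of the angle at the \emph{origin}, namely $(r^2+1-\rho^2)/(2r)$ by the law of cosines \eqref{cos-th}. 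Since the theorem promises an explicit iterative reconstruction, this is not a cosmetic point: with your kernel the Volterra equation would reconstruct the wrong function. With the corrected argument, the diagonal value is $C_l^{n/2-1}(-1)\neq 0$, and the remainder of your plan (differentiation to second kind, Picard iteration, reassembly of the series, which for uniqueness only requires $L^2$ convergence of the harmonic expansion rather than uniformity in $l$) goes through exactly as in the paper.
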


\begin{figure}[h]
  \centering
    \subfloat[]{%
    \label{sketch_ext}\includegraphics[height=50mm,keepaspectratio]{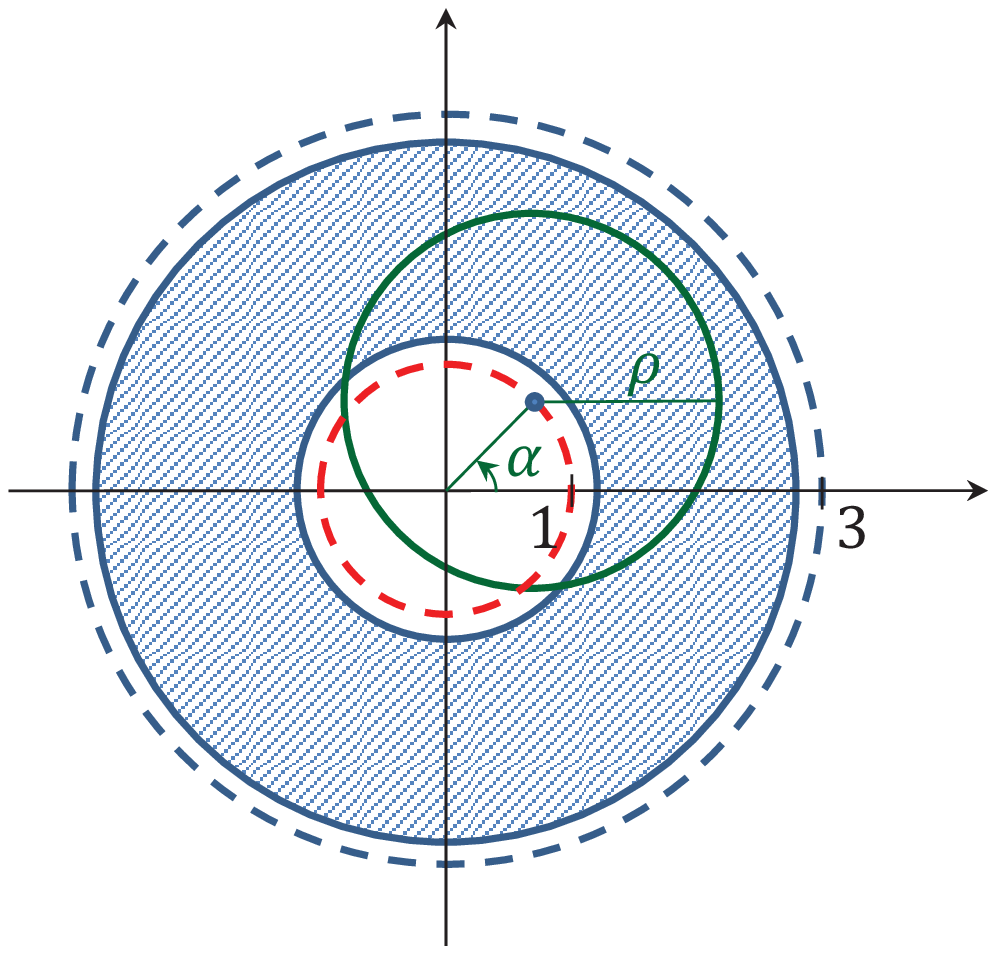}
               }
    \subfloat[]{%
     \label{sketch_int} \includegraphics[height=50mm,keepaspectratio]{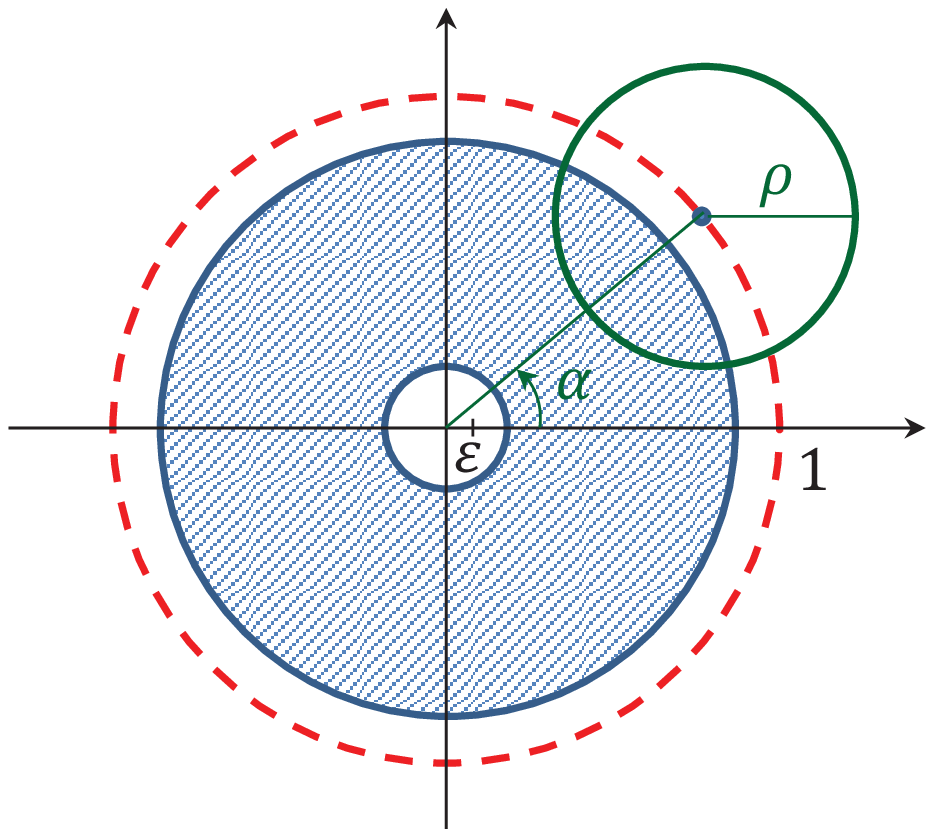}
               }
    \subfloat[]{%
    \label{sketch_int_ext}\includegraphics[height=50mm,keepaspectratio]{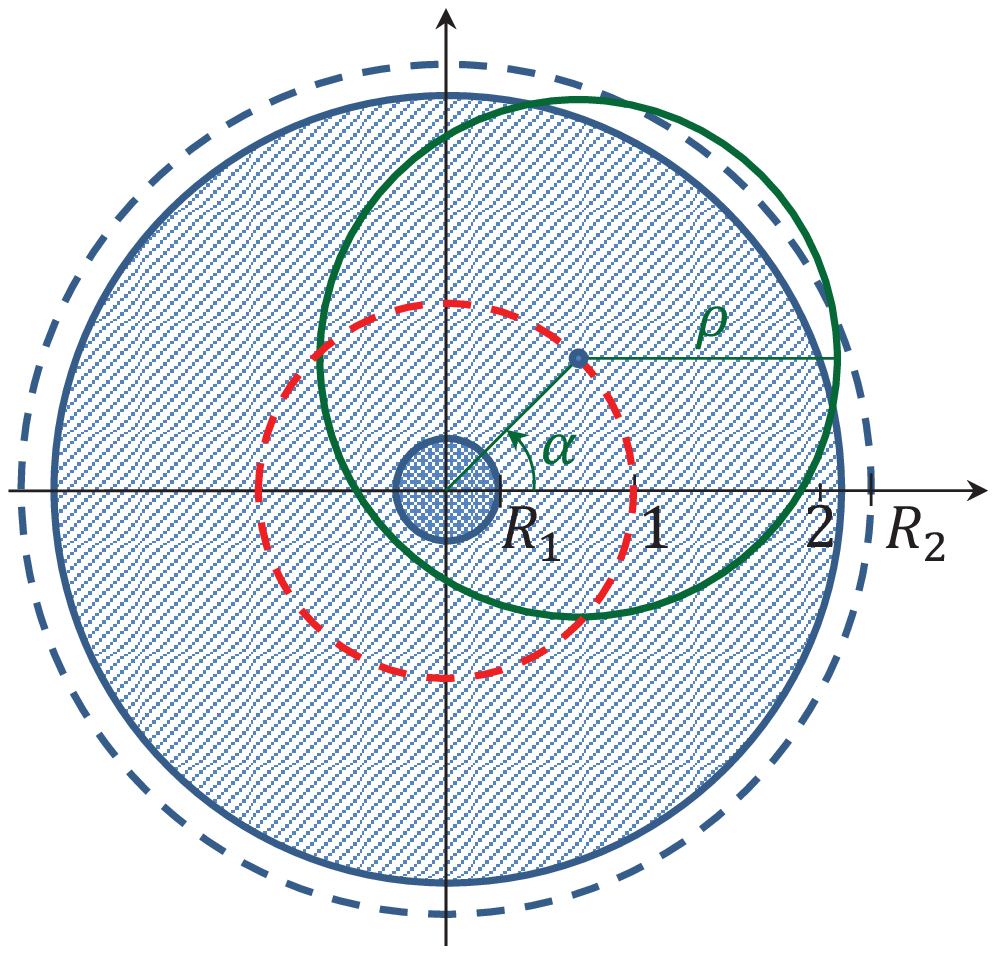}
               }
    \caption{Sketches illustrating the setups of Theorems \ref{thm:ext} in \eqref{sketch_ext}, \ref{thm:int}  in \eqref{sketch_int}, \ref{thm:int-ext}  in \eqref{sketch_int_ext}. The shaded area contains the support of $f(r,\vp)$ and the smaller dashed circle represents the data acquisition surface.}
 \label{sketches}
\end{figure}

\section{Proofs}\label{proofs}
Let $\{Y_{l}\}$ be the full set of spherical harmonics forming an orthonormal basis for $L^2$ functions on $\Sb^{n-1}$. We expand $f$ and $g$ into a series involving $\{Y_{l}\}$. We have
\begin{align}
& \label{f-expansion}
f(r,\vp)=\sum_{l={0}}^{\infty }f_{l}(r)\,Y_{l}(\vp)\\
&\label{g-expansion} g(\rho,\alpha)=\sum_{l={0}}^{\infty }g_{l}(\rho)\,Y_{l}(\alpha).
\end{align}

Due to rotational invariance of the spherical Radon transform, the spherical harmonics expansion of $f$ and $g$ leads to diagonalization of the transform, that is, for each fixed $l\ge0$ the coefficient $g_l(\rho)$ depends only on $f_l(r)$. Our main goal in the following calculations is to find that relationship, and express $f_l(r)$ through $g_l(\rho)$.

Using \eqref{f-expansion} in \eqref{spher-transf-def}, the spherical Radon transform is expressed as
\[g(\rho,\alpha)=\int\limits_{S(\rho,\alpha)} f\, \D \Omega=\int\limits_{S(\rho,\alpha)}\sum_{l={0}}^{\infty }f_{l}(r)\,Y_{l}(\vp)\, \D \Omega.
\]

Since $f$ is a $C^{\infty}$ function of compact support, by straightforward modifications of the arguments in \cite{Kalf_Paper}\footnote{The result in \cite{Kalf_Paper} shows that the spherical harmonics series of a sufficiently smooth function $h$ on the unit sphere converges uniformly to $h$. One can adapt the same arguments to show that the spherical harmonics series of a compactly supported smooth function $f$ on $(0,\infty)\times \Sb^{n-1}$ converges uniformly to $f$ in the radial and angular variables.}, we have that the spherical harmonics series of $f$ converges uniformly to $f$. Hence we can interchange the sum and the integral, and we have
\begin{equation}\label{Spherical Radon transform}
g(\rho,\alpha)=\sum_{l={0}}^{\infty }\int\limits_{S(\rho,\A)}f_{l}(r)\,Y_{l}(\vp)\, \D \Omega.
\end{equation}

We denote by $\vec{C}_{1}$ the vector pointing from the origin to the fixed point $C$ on the unit sphere in $\Rb^{n}$. Let us fix an orthonormal coordinate system for the plane $\vec{C}_{1}^{\perp}$, which we denote by
$\vec{C}_{2},\cdots,\vec{C}_{n}$. Reordering the vectors if necessary, we assume that $(\vec{C}_{1},\cdots,\vec{C}_{n-1},\vec{C}_{n})$ is an oriented orthonormal coordinate system for $\Rb^{n}$. We can consider spherical coordinates with respect to this coordinate system, and denote them by $(\wt{r},\wt{\vp})$ where $\wt{\vp}=(\wt{\vp}_{1},\cdots,\wt{\vp}_{n-1})$.


The surface measure $\D \O$ on the sphere $S(\rho,\A)$ in this  coordinate system is
\Beq\label{Surface measure new coordinates}
d\O=\rho^{n-1}\sin^{n-2}\wt{\vp}_{1}\sin^{n-3}\wt{\vp}_{2}\cdots\sin^{2}\wt{\vp}_{n-3}\sin\wt{\vp}_{n-2}\D\wt{\vp}_{1}\cdots \D\wt{\vp}_{n-1}.
\Eeq

Our next goal, which we state as Proposition \ref{prop:Transformation} below, is to find the above surface measure $\D \O$ in the spherical coordinate system $(r,\vp)$. To this end, we first prove a lemma.

Let us consider an arbitrary point $P$ in the coordinate system $(r,\vp)$ and denote it as  $r\vec{P}$ with $\vec{P}\in \Sb^{n-1}$. We recall the orthonormal coordinate system $(\vec{C}_{1},\cdots,\vec{C}_{n-1},\vec{C}_{n})$ with respect to the arbitrary fixed point $C\in \Sb^{n-1}$ introduced above and define
\Beq\label{eq:Ai}
A_{i}=\vec{P}\cdot \vec{C}_{i} \mbox{ for } 1\leq i\leq n.
\Eeq

\begin{lemma}\label{Jacobian lemma}
We have the following formula:
\Beq\label{Matrix determinant}
\det
\begin{pmatrix}
\n_{\vp}A_{1}\\
\vdots\\
\n_{\vp}A_{n-1}
\end{pmatrix}=\lb \vec{P}\cdot \vec{C}_{n}\rb \sin^{n-2}\vp_{1}\sin^{n-3}\vp_{2}\cdots\sin^{2}\vp_{n-3}\sin\vp_{n-2}\D{\vp}_{1}\cdots  \D{\vp}_{n-1}.
\Eeq
\end{lemma}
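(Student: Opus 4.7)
The plan is to realize the $(n-1)\times(n-1)$ determinant on the left as the $(n,n)$-minor of an $n\times n$ matrix built from the tangent frame of $\Sb^{n-1}$, and then recognize the latter as the classical spherical-coordinate Jacobian of $\Rb^{n}$. First, set $v_{k}=\PD_{\vp_{k}}\vec{P}$ for $k=1,\dots,n-1$. Each $v_{k}$ is a tangent vector to $\Sb^{n-1}$ at $\vec{P}$, and since $A_{i}=\vec{P}\cdot\vec{C}_{i}$, its $i$-th coordinate in the basis $(\vec{C}_{1},\dots,\vec{C}_{n})$ is precisely $\PD_{\vp_{k}}A_{i}$. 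Thus the matrix $M$ whose determinant we must evaluate is the upper-left $(n-1)\times(n-1)$ block of the $n\times(n-1)$ matrix $V$ whose columns are $v_{1},\dots,v_{n-1}$ expressed in the $C$-basis.

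Next, the key manipulation is to append one more column. Let $\widetilde{M}=[V\,|\,\vec{C}_{n}]$, with $\vec{C}_{n}$ written in the $C$-basis, so that its last column is $(0,\dots,0,1)^{T}$. Then $\widetilde{M}$ is block lower-triangular, and expanding along its last column gives $\det\widetilde{M}=\det M$. Now decompose
\begin{equation*}
\vec{C}_{n}=(\vec{C}_{n}\cdot\vec{P})\,\vec{P}+\bigl(\vec{C}_{n}-(\vec{C}_{n}\cdot\vec{P})\,\vec{P}\bigr),
\end{equation*}
into its components radial and tangent to $\Sb^{n-1}$ at $\vec{P}$. The tangential piece lies in $\mathrm{span}(v_{1},\dots,v_{n-1})$, which is the whole tangent plane $\vec{P}^{\perp}$, so by multilinearity and alternation of the determinant it contributes nothing. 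What remains is
\begin{equation*}
\det M=(\vec{P}\cdot\vec{C}_{n})\,\det\bigl[v_{1}\,|\,\cdots\,|\,v_{n-1}\,|\,\vec{P}\bigr].
\end{equation*}

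Finally, the remaining $n\times n$ determinant is, up to an elementary sign coming from the order of its columns, the classical Jacobian of the spherical parameterization $(r,\vp)\mapsto r\vec{P}(\vp)$ of $\Rb^{n}$ evaluated at $r=1$, which is well known to equal $\sin^{n-2}\vp_{1}\sin^{n-3}\vp_{2}\cdots\sin\vp_{n-2}$. Since the change of basis from $(e_{1},\dots,e_{n})$ to $(\vec{C}_{1},\dots,\vec{C}_{n})$ is orientation preserving by hypothesis, the numerical value of this determinant is the same regardless of whether the columns are written in the standard basis or the $C$-basis. Substituting this product into the previous display yields \eqref{Matrix determinant}. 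The only subtlety in the argument is the orientation and sign bookkeeping for the Jacobian in the last step; everything else is linear algebra and uses only that $\vec{P}\in\Sb^{n-1}$ and that $v_{1},\dots,v_{n-1}$ span $\vec{P}^{\perp}$.
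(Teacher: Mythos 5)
Your proof is correct, and it reaches the identity by a genuinely different route from the paper. The paper factors the matrix $(\n_{\vp}A_{i})_{i=1}^{n-1}$ as the product of the $(n-1)\times n$ matrix with rows $\vec{C}_{1},\dots,\vec{C}_{n-1}$ and the $n\times(n-1)$ matrix with columns $\PD\vec{P}/\PD\vp_{k}$, applies the Cauchy--Binet formula, and identifies the two resulting families of $(n-1)\times(n-1)$ minors as (up to sign and the sine factors) the components of $\vec{P}$ and of $\vec{C}_{n}$, using that $[v_{1}|\cdots|v_{n-1}|\vec{P}]\in O(n)$ and that $(\vec{C}_{1},\dots,\vec{C}_{n})$ is orthonormal and oriented; the inner product $\vec{P}\cdot\vec{C}_{n}$ then emerges from the Cauchy--Binet sum. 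You instead border the $(n-1)\times(n-1)$ matrix with the column $\vec{C}_{n}=(0,\dots,0,1)^{T}$ in the $C$-basis, strip off the tangential part of $\vec{C}_{n}$ by multilinearity, and reduce to the classical spherical Jacobian $\det[v_{1}|\cdots|v_{n-1}|\vec{P}]=\pm\sin^{n-2}\vp_{1}\cdots\sin\vp_{n-2}$. The two arguments are close in spirit --- both ultimately exploit that the cofactors of the last column of $[v_{1}|\cdots|v_{n-1}|\,\cdot\,]$ are proportional to $\vec{P}$ --- but yours avoids Cauchy--Binet entirely and replaces the paper's explicit lower-triangular computation of the $\PD\vec{P}/\PD\vp$ minors by a citation of the standard spherical volume element, which makes it shorter and more conceptual. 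Two minor remarks. First, your spanning claim $\mathrm{span}(v_{1},\dots,v_{n-1})=\vec{P}^{\perp}$ fails at points where some $\sin\vp_{i}=0$; but there the $v_{k}$ are linearly dependent, so the bordered determinant vanishes for \emph{any} last column, and both sides of \eqref{Matrix determinant} are zero, so nothing is lost. Second, you leave the overall sign undetermined (``up to an elementary sign''), but so does the paper, and this is harmless since only the absolute value of the determinant enters the measure computation in Proposition \ref{prop:Transformation}.
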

The proof of the above lemma relies on the following result due to Cauchy and Binet.
\begin{theorem}
[Cauchy-Binet]
Let $A$ be an $m\times n$ and $B$ be an $n\times m$ matrix. Then
\[
\det(AB)=\sum\limits_{J}\det(A(J))\det(B(J))
\]
with \[
J=j_{1},j_{2},\cdots,j_{m},\quad 1\leq j_{1}<j_{2}\cdots<j_{k}\leq m
\]
and $A(J)$ denotes the matrix formed from $A$ with the columns $J$ with the order preserved and $B(J)$ denotes the matrix formed from $B$ with the rows $J$ with the order preserved.
\end{theorem}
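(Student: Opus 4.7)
The plan is to derive Cauchy--Binet from the Leibniz formula by expanding $\det(AB)$ directly, interchanging orders of summation, and then grouping the resulting multi-index sum according to the underlying $m$-element subset $J\subseteq\{1,\dots,n\}$. The key mechanism is that alternating sums on multi-indices with any repetition vanish, so the surviving contributions are naturally indexed by such subsets $J$.

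First, starting from the Leibniz formula and expanding each entry of $AB$ as a sum over an intermediate index,
\[
\det(AB)=\sum_{\sigma\in S_m}\mathrm{sgn}(\sigma)\prod_{i=1}^m(AB)_{i,\sigma(i)}
=\sum_{\sigma\in S_m}\mathrm{sgn}(\sigma)\sum_{K}\prod_{i=1}^m A_{i,k_i}B_{k_i,\sigma(i)},
\]
where $K=(k_1,\dots,k_m)$ ranges over $\{1,\dots,n\}^m$. Swapping the two summations yields
\[
\det(AB)=\sum_{K}\Bigl(\prod_{i=1}^m A_{i,k_i}\Bigr)\sum_{\sigma\in S_m}\mathrm{sgn}(\sigma)\prod_{i=1}^m B_{k_i,\sigma(i)},
\]
and the inner alternating sum over $\sigma$ is the determinant of the $m\times m$ matrix whose $i$-th row is the $k_i$-th row of $B$. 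If two indices $k_i,k_{i'}$ coincide, that matrix has repeated rows and the determinant is zero, so only injective $K$ contribute.

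Next, each injective $K$ factors uniquely as a pair $(J,\tau)$, where $J=\{j_1<\dots<j_m\}$ is its image and $\tau\in S_m$ is defined by $k_i=j_{\tau(i)}$. Rearranging the rows $(B_{k_i,\cdot})_i$ into canonical increasing order via $\tau^{-1}$ introduces a sign $\mathrm{sgn}(\tau)$, so the inner determinant equals $\mathrm{sgn}(\tau)\det B(J)$. Collecting contributions by $J$ gives
\[
\det(AB)=\sum_{J}\det B(J)\cdot\sum_{\tau\in S_m}\mathrm{sgn}(\tau)\prod_{i=1}^m A_{i,j_{\tau(i)}},
\]
and by the Leibniz formula applied to the submatrix $A(J)$ (whose columns are the columns of $A$ indexed by $J$ in their natural order), the inner alternating sum in $\tau$ equals $\det A(J)$. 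Summing over $J$ yields the claim.

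The only technical care needed is in sign bookkeeping: one must verify that the sign introduced by reordering rows indexed by $K$ into the canonical ordering of $J$ is precisely $\mathrm{sgn}(\tau)$, and that after this rearrangement the remaining $A$-factor is $\det A(J)$ with columns in the correct order. Once this is done the identity emerges with no additional computation; the argument is essentially the four displays above.
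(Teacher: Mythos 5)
Your proof is correct: the Leibniz expansion of $\det(AB)$, the interchange of summations, the vanishing of the inner alternating sum whenever two entries of the multi-index $K$ coincide, and the factorization of each injective $K$ as a pair $(J,\tau)$ with the sign $\mathrm{sgn}(\tau)$ absorbed into $\det B(J)$ are all handled properly; the remaining sum over $\tau$ is exactly the Leibniz expansion of $\det A(J)$, so the identity follows. Note, however, that there is no paper proof to compare against: the paper quotes Cauchy--Binet as a classical auxiliary result, without proof, solely to establish its Jacobian computation (Lemma 3.1). Your argument is the standard self-contained derivation and is entirely adequate for that role. Two small points are worth recording. First, the paper's statement of the index set contains typos --- the condition should read $1\leq j_{1}<j_{2}<\cdots<j_{m}\leq n$, since $J$ is an $m$-element subset of the $n$ column indices of $A$; your proof implicitly works with this corrected form, as it must. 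Second, your argument also covers the degenerate case $m>n$ without modification: there are then no injective multi-indices $K$, the sum over $J$ is empty, and one correctly concludes $\det(AB)=0$.
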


\begin{proof}[Proof of Lemma \ref{Jacobian lemma}]
Since $A_{i}=\vec{P}\cdot \vec{C}_{i}$, we have that
\[
\begin{pmatrix}
\n_{\vp}A_{1}\\
\vdots\\
\n_{\vp}A_{n-1}
\end{pmatrix}=\begin{pmatrix}
\vec{C}_{1}\\
\vdots\\
\vec{C}_{n-1}
\end{pmatrix}\begin{pmatrix}
\frac{\PD\vec{P}^{t}}{\PD \vp_{1}}\cdots \frac{\PD\vec{P}^{t}}{\PD \vp_{n-1}}
\end{pmatrix}.
\]
We are interested in calculating the determinant of $(n-1)\times (n-1)$ matrix that is written as a product of $(n-1)\times n$ matrix with an $n\times(n-1)$ matrix (the first matrix comprising of $\vec{C}_{i}$ and the second one involving the derivatives with respect to $\vp$ of $\vec{P}$).


We have
\[
\begin{pmatrix}
\frac{\PD\vec{P}^{t}}{\PD \vp_{1}}\cdots \frac{\PD\vec{P}^{t}}{\PD \vp_{n-1}}
\end{pmatrix}=
\]
{\footnotesize
\[
\begin{pmatrix}
-\sin\vp_{1}&0&\cdots&\cdots &0\\
\cos\vp_{1}\cos\vp_{2}&-\sin\vp_{1}\sin\vp_{2}&\cdots&\cdots &0\\
\cos\vp_{1}\sin\vp_{2}\cos\vp_{3}&\sin\vp_{1}\cos\vp_{2}\cos\vp_{3}&-\sin\vp_{1}\sin\vp_{2}\sin\vp_{3}&\cdots&0\\
\vdots&\vdots&\vdots&\ddots&0\\
\cos\vp_{1}\sin\vp_{2}\cdots\sin\vp_{n-2}\cos\vp_{n-1}&\cdots&\cdots&\cdots&-\sin\vp_{1}\cdots\sin\vp_{n-1}\\
\cos\vp_{1}\sin\vp_{2}\cdots\sin\vp_{n-1}&\cdots&\cdots&\cdots&\sin\vp_{1}\cdots\sin\vp_{n-2}\cos\vp_{n-1}
\end{pmatrix}.
\]
}
The determinant of this matrix is
\[
=\sin^{n-2}\vp_{1}\sin^{n-3}\vp_{2}\cdots\sin\vp_{n-2}\det
\begin{pmatrix}
v_{1}&v_{2}&\cdots&v_{n-1}
\end{pmatrix}
\]
with the vectors $v_{i}$ for $1\leq i\leq n-1$ being an orthonormal collection of $n-1$ vectors perpendicular to the vector $\vec{P}$. Note that each of these vectors is perpendicular to $\vec{P}$ because each $v_{i}$ is obtained by differentiating $\vec{P}$ with respect to $\vp_{i}$.

Now we have
\[
\det \begin{pmatrix}v_{1}&\cdots& v_{n-1}&\vec{P}\end{pmatrix}=\pm 1,
\]
since the matrix belongs to $O(n)$. We can write the above determinant as
\[
\sum\limits_{i=1}^{n}(-1)^{n+i}\vec{P}_{i}\cdot M_{in}=\pm 1,
\]
where $\vec{P}_{i}$ denotes the $i^{\mathrm{th}}$ component of $i$ and $M_{in}$ denotes the corresponding minor.

Since $\begin{pmatrix}v_{1}&\cdots& v_{n-1}&\vec{P}\end{pmatrix}\in O(n)$, this implies that
\[
\lb (-1)^{1+n}M_{1n},\cdots,(-1)^{2n}M_{nn}\rb =\pm \vec{P}.
\]

Since $\vec{C}_{i}$ for $1\leq i\leq n-1$ are orthonormal and oriented, we have that
\[
\det\begin{pmatrix}
\vec{C}_{1}\\
\vdots\\
\vec{C}_{n-1}\\
\vec{C}_{n}
\end{pmatrix}= 1.
\]
The same argument as above shows that the vector with the minors $\wt{M}_{ni}$ coming from this matrix satisfies
\[
\lb (-1)^{1+n}\wt{M}_{n1},\cdots,(-1)^{2n}\wt{M}_{nn}\rb =\pm \vec{C}_{n}.
\]
Now using Cauchy-Binet theorem,  \eqref{Matrix determinant} is proved.
\end{proof}

Now we find the surface measure \eqref{Surface measure new coordinates} with respect to the coordinate system $(r,\vp)$.
\begin{proposition}\label{prop:Transformation} The surface measure $\D \O$ on the sphere $S(\rho,\A)$ with respect to the spherical coordinate system $(r,\vp)$ is given by
\[
\D \O = \frac{\rho^{n-2}r^{2}}{\left|r-A_{1}\right|} \sin^{n-2}\vp_{1}\cdots\sin\vp_{n-2} \D\vp_{1}\cdots \D\vp_{n-1},
\]
where $A_{1}$ is defined in \eqref{eq:Ai}.
\end{proposition}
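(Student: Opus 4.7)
The key idea is to parametrize $S(\rho,\A)$ by the outward unit vector from its center $C$ and apply Lemma \ref{Jacobian lemma} to the resulting Jacobian. Writing any point of the sphere as $x = C + \rho\vec Q$ with $\vec Q\in \Sb^{n-1}$ gives $\D\O = \rho^{n-1}\D\sigma(\vec Q)$, and in the orthonormal frame $\{\vec C_1,\dots,\vec C_n\}$ one has $Q_i = \vec Q\cdot \vec C_i = (rA_i - \delta_{i1})/\rho$. Using the first $n-1$ coordinates on $\Sb^{n-1}$ as local coordinates, $\D\sigma(\vec Q) = \D Q_1\cdots \D Q_{n-1}/|Q_n|$. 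So the task reduces to computing the $(n-1)\times(n-1)$ Jacobian determinant $\det(\PD Q_i/\PD \vp_j)$ and invoking Lemma \ref{Jacobian lemma}.

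First I would record the implicit relation $\rho^2 = r^2 - 2 r A_1(\vp) + 1$ satisfied on $S(\rho,\A)$, which comes from expanding $|x-C|^2$ with $C = \vec C_1$. Differentiating with $\vp$ varying and $\rho$ held fixed yields
\[
\frac{\PD r}{\PD \vp_j} = \frac{r}{r - A_1}\,\frac{\PD A_1}{\PD \vp_j}.
\]
Substituting this into $\PD Q_i/\PD \vp_j = \bigl(A_i\,\PD r/\PD \vp_j + r\,\PD A_i/\PD \vp_j\bigr)/\rho$ expresses row $i$ of the Jacobian as $(r/\rho)\bigl(A_i(r-A_1)^{-1}\n_\vp A_1 + \n_\vp A_i\bigr)$; in particular row $1$ collapses to $\bigl(r^{2}/(\rho(r-A_1))\bigr)\n_\vp A_1$.

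A single pass of row operations — subtracting $A_i/(r-A_1)$ times row $1$ from row $i$ for $i\ge 2$ — then eliminates the $\n_\vp A_1$ contributions in rows $2,\ldots,n-1$. The residual determinant is precisely $\det(\n_\vp A_1,\ldots, \n_\vp A_{n-1})$, which Lemma \ref{Jacobian lemma} identifies (up to sign) with $A_n \sin^{n-2}\vp_1\cdots\sin\vp_{n-2}$. Combining this with the scalar prefactor extracted during the row manipulations, dividing by $|Q_n| = r|A_n|/\rho$ so that the $A_n$ cancels cleanly, and finally multiplying by $\rho^{n-1}$ to pass from $\D\sigma(\vec Q)$ back to $\D\O$ yields the surface measure in $(r,\vp)$-coordinates. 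The main technical obstacle is the bookkeeping of row operations, extracted scalars, and Jacobian signs; the branch ambiguity — a radial ray from the origin meeting $S(\rho,\A)$ in up to two points — is automatically absorbed into the absolute value $|r - A_1|$, so no explicit case split is required. A conceptually distinct alternative would be the coarea formula: since the distance-to-$C$ function has unit gradient, $\D x = \D\O\wedge \D\rho$ holds locally, and a direct change of variables from $(r,\vp)$ to $(\rho,\vp)$ using $\PD\rho/\PD r = (r - A_1)/\rho$ recovers the same formula without invoking Lemma \ref{Jacobian lemma}.
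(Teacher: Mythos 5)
Your route is genuinely different from the paper's. The paper keeps the angular coordinates $\wt{\vp}_1,\dots,\wt{\vp}_{n-1}$ of the frame at $C$, computes the Jacobian of $(\vp)\mapsto(\cos\wt{\vp}_1,\dots,\cos\wt{\vp}_{n-1})$, and then spends the second half of the proof trading the nested factors $\sqrt{1-(A_1^2+\cdots+A_k^2)}$ for powers of $\sin\wt{\vp}_i$; you instead parametrize $S(\rho,\A)$ by $\vec Q=(x-C)/\rho$ and use the graph formula $\D\sigma=\D Q_1\cdots\D Q_{n-1}/|Q_n|$, which is cleaner and avoids that bookkeeping entirely. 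Your individual steps are all correct: $Q_i=(rA_i-\delta_{i1})/\rho$, the collapse of row $1$ to $\frac{r^2}{\rho(r-A_1)}\n_\vp A_1$, the row reduction, and $|Q_n|=r|A_n|/\rho$. But you stop at ``yields the surface measure'' without writing the constant, and if you actually finish the arithmetic you get
\[
\left|\det\frac{\PD Q}{\PD\vp}\right|=\frac{r^{2}}{\rho\,|r-A_1|}\left(\frac{r}{\rho}\right)^{n-2}\left|\det(\n_\vp A_1,\dots,\n_\vp A_{n-1})\right|,
\qquad
\D\O=\rho^{n-1}\,\frac{|\det(\PD Q/\PD\vp)|}{|Q_n|}\,\D\vp=\frac{\rho\, r^{\,n-1}}{|r-A_1|}\sin^{n-2}\vp_1\cdots\sin\vp_{n-2}\,\D\vp_1\cdots\D\vp_{n-1},
\]
which is \emph{not} the asserted $\frac{\rho^{n-2}r^{2}}{|r-A_1|}(\cdots)$; the two coincide only for $n=3$. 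Your coarea alternative gives the same $\rho\,r^{n-1}$ prefactor in one line ($\D x=r^{n-1}\D r\,\D\sigma(\vp)$ together with $\PD\rho/\PD r=(r-A_1)/\rho$), and a third check — differentiating $\int_{|x-C|\le\rho}\D x$ in $\rho$ in polar coordinates about the origin — confirms it; with the stated exponents the total mass of $\D\O$ comes out strictly less than $|\Sb^{n-1}|\rho^{n-1}$ already for $n=4$.

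So the gap is not in your method but in the claim that it proves the Proposition as stated: it proves a version with corrected exponents, and you need to say so explicitly rather than asserting agreement. For what it is worth, the paper's own proof loses exactly the factor $(r/\rho)^{n-3}$ at the final ``multiplying and dividing'' step: it treats every $\sin\wt{\vp}_i$ as the ratio $\sqrt{1-(A_1^2+\cdots+A_i^2)}\big/\sqrt{1-(A_1^2+\cdots+A_{i-1}^2)}$, which is valid for $i\ge2$, but $\sin\wt{\vp}_1=\frac{r}{\rho}\sqrt{1-A_1^2}$ carries an extra $r/\rho$ and enters the measure with exponent $n-2$. Nothing downstream collapses — the Volterra structure, the nonvanishing of $K_l(\rho,\rho)$, and the $n=3$ formulas used in the numerics are unaffected — but your write-up must end with the explicit constant $\rho\,r^{n-1}/|r-A_1|$ and a remark on the discrepancy, not with a hand-wave toward the stated formula.
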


\begin{proof}
We have
\[
d\O=\rho^{n-1}\sin^{n-2}\wt{\vp}_{1}\sin^{n-3}\wt{\vp}_{2}\cdots\sin^{2}\wt{\vp}_{n-3}\sin\wt{\vp}_{n-2}\D\wt{\vp}_{1}\cdots \D\wt{\vp}_{n-1}.
\]
We express $\cos\wt{\vp}_{i}$ for $1\leq i\leq n-1$ in terms of the coordinates $(\vp_{1},\cdots,\vp_{n-1})$.

We have:
\[
\cos \wt{\vp}_{1}=\frac{\lb r\vec{P}_{1}-\vec{C}_{1}\rb\cdot \vec{C}_{1}}{|r\vec{P}-\vec{C}_{1}|}=\frac{r\vec{P}_{1}\cdot \vec{C}_{1}-1}{\rho}=
\frac{rA_1-1}{\rho}.
\]

Furthermore it is easy to see that
\[
\begin{pmatrix}
\cos\wt{\vp}_{1}\\
\vdots\\
\cos \wt{\vp}_{n-1}
\end{pmatrix}=
\begin{pmatrix}
\frac{rA_{1}-1}{\rho}\\
\frac{A_{2}}{\sqrt{1-A_{1}^{2}}}\\
\vdots\\
\frac{A_{n-1}}{\sqrt{1-\lb A_{1}^{2}+\cdots +A_{n-2}^{2}\rb}}
\end{pmatrix}.
\]
Let us compute the determinant of the Jacobian of the transformation
\Beq\label{Transformation}
(\vp_{1},\cdots,\vp_{n-1})\to (\cos\wt{\vp}_{1},\cdots,\cos\wt{\vp}_{n-1}).
\Eeq
Since
\begin{equation}\label{cos-th}
\rho^{2}= r^{2}+1-2rA_{1},
\end{equation}
differentiating this  equation, we get
\[
\frac{\PD r}{\PD \vp_{i}}=\frac{r}{r-A_{1}}\frac{\PD A_{1}}{\PD \vp_{i}}.
\]

The Jacobian matrix of \eqref{Transformation} is
\[
\begin{pmatrix}
\frac{r^{2}}{\rho(r-A_{1})}\nabla_{\vp} A_{1}\\
\frac{1}{\sqrt{1-A_{1}^{2}}}\n_{\vp}A_{2}+\frac{A_{1}A_{2}}{(1-A_{1}^{2})^{3/2}}\n_{\vp}A_{1}\\
\vdots\\
\frac{1}{\sqrt{1-(A_{1}^{2}+\cdots+A_{n-2}^{2})}}\n_{\vp}A_{n-1}+\frac{A_{1}A_{n-1}\n_{\vp}A_{1}+\cdots+A_{n-2}A_{n-1}\n_{\vp}A_{n-1}}{(1-(A_{1}^{2}+\cdots+A_{n-2}^{2})^{3/2}}\end{pmatrix}
\]
Here $\n_{\vp}$ denotes the $(n-1)$-vector $(\frac{\PD}{\PD \vp_{1}},\cdots,\frac{\PD}{\PD \vp_{n-1}})$.

The determinant of the matrix above is the same as the determinant of the matrix
\[\frac{r^{2}}{\rho(r-A_{1})}\frac{1}{\sqrt{1-A_{1}^{2}}}\cdots\frac{1}{\sqrt{1-(A_{1}^{2}+\cdots+A_{n-2}^{2}}}
\begin{pmatrix}
\n_{\vp}A_{1}\\
\vdots\\
\n_{\vp}A_{n-1}
\end{pmatrix}.
\]
Recall that  we are interested in expressing
\[
\sin^{n-2}\wt{\vp}_{1}\cdots \sin\wt{\vp}_{n-2}\D\wt{\vp}_{1}\cdots \D\wt{\vp}_{n-1}
\]
in terms of $\D{\vp}_{1}\cdots \D{\vp}_{n-1}$.
Using Lemma \ref{Jacobian lemma}, we have,
\begin{align}\label{Measure transformation}
\notag \lb \sin\wt{\vp}_{1}\cdots\sin\wt{\vp}_{n-1}\rb \D\wt{\vp}_{1}\cdots \D\wt{\vp}_{n-1}=&\frac{r^{2}}{\rho(r-A_{1})}\frac{1}{\sqrt{1-A_{1}^{2}}}\cdots\frac{1}{\sqrt{1-(A_{1}^{2}+\cdots+A_{n-2}^{2})}}\\
 &\times \lb \vec{P}\cdot \vec{C}_{n}\rb \sin^{n-2}\vp_{1}\cdots\sin\vp_{n-2} \D\vp_{1}\cdots \D\vp_{n-1}
\end{align}
Note that
\[
\sin\wt{\vp}_{n-1}=\frac{\vec{P}\cdot \vec{C}_{n}}{\sqrt{1-(A_{1}^{2}+\cdots+A_{n-2}^{2})}}.
\]
Therefore we have
\begin{align*}
\lb \sin\wt{\vp}_{1}\cdots\sin\wt{\vp}_{n-2}\rb \D\wt{\vp}_{1}\cdots \D\wt{\vp}_{n-1}&=\frac{r^{2}}{\rho(r-A_{1})}\frac{1}{\sqrt{1-A_{1}^{2}}}\cdots\frac{1}{\sqrt{1-(A_{1}^{2}+\cdots+A_{n-3}^{2})}}\\
&\quad \times \sin^{n-2}\vp_{1}\cdots\sin\vp_{n-2} \D\vp_{1}\cdots \D\vp_{n-1}.
\end{align*}
Hence
\begin{align}\label{measure change of variable formula}
\notag &{\sqrt{1-A_{1}^{2}}}\cdots{\sqrt{1-(A_{1}^{2}+\cdots+A_{n-3}^{2})}}\lb \sin\wt{\vp}_{1}\cdots\sin\wt{\vp}_{n-2}\rb d\wt{\vp}_{1}\cdots \wt{\vp}_{n-1}\\
&\quad =\frac{r^{2}}{\rho(r-A_{1})} \sin^{n-2}\vp_{1}\cdots\sin\vp_{n-2} \D\vp_{1}\cdots \D\vp_{n-1}.
\end{align}

Now we have
\[
\left|\frac{\sqrt{1-(A_{1}^{2}+\cdots+A_{n-3}^{2})}}{\sqrt{1-(A_{1}^{2}+\cdots+A_{n-4}^{2})}}\right|=\left|\sin\wt{\vp}_{n-3}\right|.
\]
Multiplying and dividing the left hand side of \eqref{measure change of variable formula}, by $\sqrt{1-(A_{1}^{2}+\cdots+A_{n-4}^{2})}$ and then by $(1-(A_{1}^{2}+\cdots+A_{n-5}^{2}))$ and continuing this way, we get

\begin{align}\notag&\sin^{n-2}\wt{\vp}_{1}\sin^{n-3}\wt{\vp}_{2}\cdots\sin\wt{\vp}_{n-2} \D\wt{\vp}_{1}\cdots \D\wt{\vp}_{n-1}\\
&\quad=\frac{r^{2}}{\rho(r-A_{1})}\sin^{n-2}\vp_{1}\cdots\sin\vp_{n-2} \D\vp_{1}\cdots \D\vp_{n-1}.
\end{align}

Since we are interested in the absolute value of the determinant of the Jacobian of the transformation in \eqref{Transformation}, we finally have
\begin{align*}
&\rho^{n-1}\sin^{n-2}\wt{\vp}_{1}\sin^{n-3}\wt{\vp}_{2}\cdots\sin\wt{\vp}_{n-2} \D\wt{\vp}_{1}\cdots \D\wt{\vp}_{n-1}\\
&\quad=\frac{\rho^{n-2}r^{2}}{\left|r-A_{1}\right|}\sin^{n-2}\vp_{1}\cdots\sin\vp_{n-2} \D\vp_{1}\cdots \D\vp_{n-1}.
\end{align*}
This completes the proof.
\end{proof}
\subsection{Exterior problem}
In this section, we prove Theorem \ref{thm:int}.

We have
\[
g(\rho,\alpha)= \sum_{l={0}}^{\infty }g_{l}(\rho)\,Y_{l}(\alpha)
\]
and
\begin{equation}\label{Integral relation between g and f2}
\notag g(\rho,\alpha)=\sum_{l={0}}^{\infty }\int\limits_{S(\rho,\alpha)}f_{l}(r)\,Y_{l}(\varphi)\, \D \Omega.
\end{equation}
Using Proposition 3.3, we can write the above surface measure
$\D \O = \frac{\rho^{n-2}r^{2}}{\left|r-A_{1}\right|}\;\D \O(\vp)$, where $\D \O(\vp)= \sin^{n-2}\vp_{1}\cdots\sin\vp_{n-2} \D\vp_{1}\cdots \D\vp_{n-1}.$
Then
\begin{equation}\label{Integral relation between g and f}
\notag g(\rho,\alpha)= \sum_{l={0}}^{\infty}\int\limits_{\Sb^{n-1}}f_{l}(r)Y_{l}(\varphi) \frac{\rho^{n-2} r^{2}}{r-A_{1}}  \D \O(\vp).
\end{equation}

The integrand in \eqref{Integral relation between g and f} is to be interpreted as $0$ outside a suitable range of $\vp$. Now since $r= A_{1} + \sqrt{A_{1}^{2} +\rho^{2}-1}$, we have
\begin{align*}
g(\rho,\A)=\sum_{l={0}}^{\infty}&\int\limits_{\Sb^{n-1}}f_{l}(A+\sqrt{A^{2}+\rho^{2}-1}) \frac{\rho^{n-2} \lb A+\sqrt{A^{2}+\rho^{2}-1}\rb^{2}}{\sqrt{A^{2}+\rho^{2}-1}} \\
&\quad\quad\quad\times Y_{l}(\varphi)\D \O(\vp).
\end{align*}
Now we apply Funk-Hecke theorem.
\begin{theorem}
[Funk-Hecke]
If
\[ \int\limits_{-1}^{1} |F(t)|(1-t^{2})^{\frac{n-3}{2}} \D t <\infty,\] then
\[
\int\limits_{\Sb^{n-1}}F\lb \langle \sigma,\eta\rangle \rb Y_{l}(\sigma) \D \sigma = \frac{\left|\Sb^{n-2}\right|}{C_{l}^{\frac{n}{2}-1}(1)}\lb\int\limits_{-1}^{1} F(t) C_{l}^{\frac{n}{2}-1}(t) (1-t^{2})^{\frac{n-3}{2}} \D t\rb Y_{l}(\eta),
\]
where $\lvert \Sb^{n-2}\rvert$ denotes the surface measure of the unit sphere in $\Rb^{n-1}$ and $C_{l}^{\frac{n}{2}-1}$ are the Gegenbauer polynomials.
\end{theorem}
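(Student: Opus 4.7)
The plan is to prove the Funk--Hecke identity via the standard representation-theoretic (Schur's lemma) argument, which turns the question into a one-variable integral computation once the action of a zonal convolution operator on each spherical harmonic subspace has been identified.

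First, I would regard the left-hand side as defining a linear operator $T_F$ on $L^{2}(\Sb^{n-1})$ by
\[
(T_{F}Y)(\eta)=\int_{\Sb^{n-1}} F\lb \L\sigma,\eta\R\rb Y(\sigma)\, \D\sigma.
\]
Since the kernel $F(\L\sigma,\eta\R)$ depends only on the inner product $\L\sigma,\eta\R$, the operator $T_{F}$ commutes with the natural $O(n)$-action on the sphere. Invoking the orthogonal decomposition $L^{2}(\Sb^{n-1})=\bigoplus_{l\geq 0}\Hc_{l}$ into spaces of spherical harmonics of degree $l$, each of which is an irreducible $O(n)$-representation, Schur's lemma forces $T_{F}$ to act as a scalar $\lambda_{l}(F)$ on $\Hc_{l}$. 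Hence $T_{F}Y_{l}=\lambda_{l}(F)Y_{l}$ for every $Y_{l}\in\Hc_{l}$, which already yields the structural part of the formula—only the constant has to be pinned down.

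To compute $\lambda_{l}(F)$, I would fix a north pole $\eta_{0}\in\Sb^{n-1}$ and test the identity on the zonal harmonic $Z_{l}^{\eta_{0}}(\sigma)=c_{n,l}\,C_{l}^{n/2-1}(\L\sigma,\eta_{0}\R)$, where $c_{n,l}$ is the normalization making $Z_{l}^{\eta_{0}}$ the reproducing kernel of $\Hc_{l}$ at $\eta_{0}$. Evaluating the eigenvalue equation at $\eta=\eta_{0}$ gives
\[
\lambda_{l}(F)\, Z_{l}^{\eta_{0}}(\eta_{0}) = c_{n,l}\int_{\Sb^{n-1}} F\lb\L\sigma,\eta_{0}\R\rb C_{l}^{n/2-1}\lb\L\sigma,\eta_{0}\R\rb \D\sigma.
\]
Parametrizing $\Sb^{n-1}$ by the height $t=\L\sigma,\eta_{0}\R\in[-1,1]$ together with coordinates on the latitude sphere $\Sb^{n-2}$, the surface measure factorizes as $(1-t^{2})^{(n-3)/2}\,\D t\,\D\omega$, and the integrand depends only on $t$. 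Carrying out the $\Sb^{n-2}$ integration produces a factor of $|\Sb^{n-2}|$ and reduces the right-hand side to $c_{n,l}|\Sb^{n-2}|\int_{-1}^{1}F(t)C_{l}^{n/2-1}(t)(1-t^{2})^{(n-3)/2}\D t$. Using $Z_{l}^{\eta_{0}}(\eta_{0})=c_{n,l}\,C_{l}^{n/2-1}(1)$ cancels the normalization constant $c_{n,l}$, leaving the claimed formula.

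The main obstacle I anticipate is the identification of zonal harmonics with Gegenbauer polynomials, which is what underlies the whole calculation: one must show that the subspace of $\Hc_{l}$ fixed by the stabilizer $O(n-1)$ of $\eta_{0}$ is one-dimensional and spanned by a polynomial of degree $l$ in $\L\sigma,\eta_{0}\R$ that is harmonic on $\Sb^{n-1}$—this forces it to be proportional to $C_{l}^{n/2-1}\lb\L\sigma,\eta_{0}\R\rb$ by the defining property (and recursion) of the Gegenbauer polynomials. The remaining analytic point is the use of Fubini to split the surface integral as a product of a $t$-integral and an $\Sb^{n-2}$-integral; this is exactly legitimized by the hypothesis $\int_{-1}^{1}|F(t)|(1-t^{2})^{(n-3)/2}\D t<\infty$, which also ensures that all the integrals under consideration converge absolutely.
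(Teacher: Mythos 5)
The paper does not prove this statement: Funk--Hecke is quoted there as a classical tool (stated in the middle of the proof of the exterior-support theorem and then applied), with no argument given. So there is nothing to compare against; what can be assessed is whether your proof stands on its own, and it essentially does --- it is the standard representation-theoretic proof. The operator $T_F$ is $O(n)$-equivariant, the spaces $\Hc_l$ are pairwise inequivalent irreducibles, so $T_F$ preserves each and acts by a scalar, and the scalar is computed by testing on the zonal harmonic and slicing the sphere by latitude. Two points deserve slightly more care than you give them. First, Schur's lemma over $\Rb$ only yields a scalar because the $\Hc_l$ are \emph{absolutely} irreducible (their complexifications remain irreducible for $n\ge 3$); otherwise the commutant could be $\Cb$ or $\Hb$. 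Second, the identification of the $O(n-1)$-invariant vector in $\Hc_l$ with $C_l^{n/2-1}(\langle\sigma,\eta_0\rangle)$, which you correctly single out as the crux, is genuinely where the work lives: one must check that restricting the Laplacian to zonal functions produces exactly the Gegenbauer differential equation in the variable $t=\langle\sigma,\eta_0\rangle$, so that the unique (up to scale) degree-$l$ polynomial solution is $C_l^{n/2-1}$. With that supplied, and with the observation that the hypothesis $\int_{-1}^1|F(t)|(1-t^2)^{(n-3)/2}\,\D t<\infty$ makes $T_F Y_l$ everywhere defined and continuous (so its harmonic expansion is legitimate), your outline closes into a complete proof.
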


Using this theorem, we have,
\[
g_{l}(\rho)=\frac{\left|\Sb^{n-2}\right|}{C_{l}^{\frac{n}{2}-1}(1)}\int \limits_{1-\frac{\rho^{2}}{2}}^{1} f_{l}(x+\sqrt{x^{2}+\rho^{2}-1})  \frac{\rho^{n-2}\lb x+\sqrt{x^{2}+\rho^{2}-1}\rb^{2}}{\sqrt{x^{2}+\rho^{2}-1}} C_{l}^{\frac{n}{2}-1}(x)(1-x^{2})^{\frac{n-3}{2}} \D x .
\]

Making the change of variables $ r=x+\sqrt{x^2+\rho^2-1}$, we have
\begin{align*}
g_{l}(\rho)&=\frac{\rho^{n-2}\left|\Sb^{n-2}\right|}{C_{l}^{\frac{n}{2}-1}(1)}\int \limits_{1}^{1+\rho} f_{l}(r)r \lb C_{l}^{\frac{n}{2}-1}\lb\frac{r^{2}-\rho^{2}+1}{2r}\rb\rb\lb 1-\lb\frac{r^{2}-\rho^{2}+1}{2r}\rb^{2}\rb^{\frac{n-3}{2}} \D r\\
&=\frac{\rho^{n-2}\left|\Sb^{n-2}\right|}{C_{l}^{\frac{n}{2}-1}(1)}\int \limits_{0}^{\rho} f_{l}(r+1)(r+1) \lb C_{l}^{\frac{n}{2}-1}\lb\frac{r^{2}+2r-\rho^{2}+2}{2r+2}\rb\rb\lb 1-\lb\frac{r^{2}+2r-\rho^{2}+2}{2r+2}\rb^{2}\rb^{\frac{n-3}{2}} \D r
\end{align*}
This can be written in the form
\[
g_{l}(\rho)=\int\limits_{0}^{\rho} K_{l}(\rho,r) F_{l}(r) \D r,
\]
where
\begin{align*}
& K_{l}(\rho,r)= \frac{\rho^{n-2}\left|\Sb^{n-2}\right|}{C_{l}^{\frac{n}{2}-1}(1)}(r+1) \lb C_{l}^{\frac{n}{2}-1}\lb\frac{r^{2}+2r-\rho^{2}+2}{2r+2}\rb\rb\lb 1-\lb\frac{r^{2}+2r-\rho^{2}+2}{2r+2}\rb^{2}\rb^{\frac{n-3}{2}} \\
&F_{l}(r)=f_{l}(r+1).
\end{align*}
This is a Volterra integral equation of the first kind (see \cite{Volterra-book}). The kernel $K_{l}(\rho,r)$ is continuous together with its first derivatives and $K_{l}(\rho,\rho) \neq 0$ on the interval $(0,R_1)$, where $0<R_1<2$.  Equations of this type have a unique solution, which can be obtained through modification to a Volterra equation of the second kind,  and then using a resolvent kernel given by Picard's process of successive approximations (see \cite{Polyanin,Tricomi}). This completes the proof of Theorem \ref{thm:ext}.

\subsection{Interior problem}
Next we prove Theorem \ref{thm:int}.

Our starting point is:
\[
g(\rho,\A)=\sum_{l={0}}^{\infty }\int\limits_{\mathbb{S}^{n-1}}f_{l}(r) \frac{\rho^{n-2} r^{2}}{|r-A_{1}|} \,Y_{l}(\varphi)\D \O(\vp)
\]

We split the integral
\begin{align*}
\int\limits_{\mathbb{S}^{n-1}}f_{l}(r) \frac{\rho^{n-2} r^{2}}{|r-A_{1}|} Y_{l}(\varphi)\D \O(\vp)& =\int\limits_{\mathbb{S}^{n-1}_{+}}f_{l}(r) \frac{\rho^{n-2} r^{2}}{|r-A_{1}|} \,Y_{l}(\varphi) \D \O(\vp)\\
 &\quad \quad + \int\limits_{\mathbb{S}^{n-1}_{-}}f_{l}(r) \frac{\rho^{n-2} r^{2}}{|r-A_{1}|} \,Y_{l}(\varphi) \D \O(\vp),
\end{align*}
where $\Sb^{n-1}_{\pm}$ corresponds to those points on the unit sphere such that the line passing through it and the origin intersects a point on the sphere $S(\rho,\A)$ corresponding to $r=A_{1}\pm\sqrt{A_{1}^{2}+\rho^{2}-1}$.
Let us denote the right hand side of the above equation as $I_{1}+I_{2}$. We have
\[
I_{1}=\int\limits_{S^{n-1}_{+}} f_{l}\lb A_{1}+\sqrt{A_{1}^{2}+\rho^{2}-1}\rb \frac{\rho^{n-2} (A_{1}+\sqrt{A_{1}^{2}+\rho^{2}-1})^{2}}{\sqrt{A_{1}^{2}+\rho^{2}-1}}Y_{l}(\vp) \D \O(\vp).
\]
Applying Funk-Hecke theorem, this integral is
\[
I_{1}=\frac{\left|\Sb^{n-2}\right|}{C_{l}^{\frac{n}{2}-1}(1)}\lb\int \limits_{1-\frac{\rho^{2}}{2}}^{1} f_{l}(x+\sqrt{x^{2}+\rho^{2}-1})  \frac{\rho^{n-2}\lb x+\sqrt{x^{2}+\rho^{2}-1}\rb^{2}}{\sqrt{x^{2}+\rho^{2}-1}} C_{l}^{\frac{n}{2}-1}(x)(1-x^{2})^{\frac{n-3}{2}} \D x\rb Y_{l}(\A).
\]
Similarly
\[
I_{2}=\frac{\left|\Sb^{n-2}\right|}{C_{l}^{\frac{n}{2}-1}(1)}\lb \int \limits_{\sqrt{1-\rho^{2}}}^{1-\frac{\rho^{2}}{2}} f_{l}(x-\sqrt{x^{2}+\rho^{2}-1})  \frac{\rho^{n-2}\lb x-\sqrt{x^{2}+\rho^{2}-1}\rb^{2}}{\sqrt{x^{2}+\rho^{2}-1}} C_{l}^{\frac{n}{2}-1}(x)(1-x^{2})^{\frac{n-3}{2}} \D x \rb Y_{l}(\A).
\]
Making the change of variables $r=x+\sqrt{x^{2}+\rho^{2}-1}$ in $I_{1}$ and $r=x-\sqrt{x^{2}+\rho^{2}-1}$ and summing up the two integrals, we get,
\begin{align*}
g_{l}(\rho)&=\frac{\rho^{n-2}\left|\Sb^{n-2}\right|}{C_{l}^{\frac{n}{2}-1}(1)}\int \limits_{1-\rho}^{1} f_{l}(r)r \lb C_{l}^{\frac{n}{2}-1}\lb\frac{r^{2}-\rho^{2}+1}{2r}\rb\rb\lb 1-\lb\frac{r^{2}-\rho^{2}+1}{2r}\rb^{2}\rb^{\frac{n-3}{2}} \D r\\
&=\frac{\rho^{n-2}\left|\Sb^{n-2}\right|}{C_{l}^{\frac{n}{2}-1}(1)}\int \limits_{0}^{\rho} f_{l}(1-r)(1-r) \lb C_{l}^{\frac{n}{2}-1}\lb\frac{r^{2}-2r-\rho^{2}+2}{2-2r}\rb\rb\lb 1-\lb\frac{r^{2}-2r-\rho^{2}+2}{2-2r}\rb^{2}\rb^{\frac{n-3}{2}} \D r
\end{align*}
This is of the form
\[
g_{l}(\rho)=\int\limits_{0}^{\rho} K_{l}(\rho,r) F_{l}(r) \D r,
\]
where
\begin{align*}
& K_{l}(\rho,r)= \frac{\rho^{n-2}\left|\Sb^{n-2}\right|}{C_{l}^{\frac{n}{2}-1}(1)}(1-r) \lb C_{l}^{\frac{n}{2}-1}\lb \frac{r^{2}-2r-\rho^{2}+2}{2-2r}\rb\rb\lb 1-\lb\frac{r^{2}-2r-\rho^{2}+2}{2-2r}\rb^{2}\rb^{\frac{n-3}{2}} \\
&F_{l}(r)=f_{l}(1-r).
\end{align*}
Note that $K_n(\rho,\rho)$ does not vanishes in the interval $(0,1-\varepsilon)$ and its derivatives exist and are continuous. The rest of the proof follows exactly as in Theorem \ref{thm:ext}.

\subsection{Interior/exterior problem}
Finally we prove Theorem \ref{thm:int-ext}.

Since the argument is exactly as in Theorems \ref{thm:ext} and \ref{thm:int}, we will only give the final integral identity. Assume that the function $f$ is supported inside the ball  $B(0,R_{2})$ centered at the origin and of radius $R_{2}$, where $R_{2}>2$ and $R_{1}=R_{2}-2$. Suppose the spherical Radon transform data is known along all spheres of radius $\rho$ centered on the unit sphere  with $R_{2}-1<\rho< R_{2}+1$, then we have the following Volterra-type integral equation:
\begin{align*}
g_{l}(\rho)&=\frac{\rho^{n-2}\left|\Sb^{n-2}\right|}{C_{l}^{\frac{n}{2}-1}(1)}\int \limits_{\rho-1}^{R_{2}} f_{l}(r)r \lb C_{l}^{\frac{n}{2}-1}\lb\frac{r^{2}-\rho^{2}+1}{2r}\rb\rb\lb 1-\lb\frac{r^{2}-\rho^{2}+1}{2r}\rb^{2}\rb^{\frac{n-3}{2}} \D r\\
&=\frac{\rho^{n-2}\left|\Sb^{n-2}\right|}{C_{l}^{\frac{n}{2}-1}(1)}\int \limits_{0}^{R_2+1-\rho} f_{l}(R_{2}-r)(R_{2}-r) \lb C_{l}^{\frac{n}{2}-1}\lb\frac{(R_{2}-r)^{2}-\rho^{2}+1}{2(R_{2}-r)}\rb\rb\\
&\hspace{1.5in} \times \lb 1-\lb\frac{(R_{2}-r)^{2}-\rho^{2}+1}{2(R_2-r)}\rb^{2}\rb^{\frac{n-3}{2}} \D r
\end{align*}
Making a change of variable $\hat{\rho}=R_2+1-\rho$ we get
\[
G_{l}(\hat{\rho})=\int\limits_{0}^{\hat{\rho}} K_{l}(\hat{\rho},r) F_{l}(r) \D r,
\]
where
\begin{align*}
 &K_{l}(\hat{\rho},r)= \frac{{(R_2+1-\hat{\rho})}^{n-2}\left|\Sb^{n-2}\right|}{C_{l}^{\frac{n}{2}-1}(1)}(R_{2}-r)  \lb C_{l}^{\frac{n}{2}-1}\lb\frac{(R_{2}-r)^{2}-{(R_2+1-\hat{\rho})}^{2}+1}{2(R_{2}-r)}\rb\rb\\
&\hspace{1in} \times\lb 1-\lb\frac{(R_{2}-r)^{2}-{(R_2+1-\hat{\rho})}^{2}+1}{2(R_2-r)}\rb^{2}\rb^{\frac{n-3}{2}} \\
&F_{l}(r)=f_{l}(R_{2}-r), \;\;G_{l}(\hat{\rho})=g_{l}(R_2+1-\hat{\rho}).
\end{align*}
The rest of the proof follows exactly as before.

\section{Three dimensional case}\label{3d}
In the numerical simulations below, we specialize to the case of $3$-dimensions.  Therefore, in this section, we give the formulas derived earlier for the case of $n=3$.

In this section, for the sake of convenience, we rename the vector $\A$ as $(\A,\B)$ and the vector $\vp$ as $(\vp,\theta)$. Thus in this section and the next, the point $C$ will be denoted by $(\A,\B)$, more precisely, the Euclidean coordinates of the point $C$ on the unit sphere will be denoted by $(\cos \A,\sin\A\cos \B,\sin\A\sin\B)$. A point $P$ on the sphere $S(\rho,\A,\B)$ will be denoted by $(r\cos \vp, r\sin\vp \cos \theta,r\sin\vp \sin\theta)$.

Here the spherical harmonics for $f$ and $Rf=g$ are expanded as
\begin{equation}\label{3fg-expansion}
f(r,\vp,\theta)=\sum_{l={0}}^{\infty }\sum_{m=-l}^{l}f_{l}^{m}(r)\,Y_{l}^{m}(\vp,\theta).
\end{equation}
\begin{equation}\label{3g-expansion}
g(\rho,\alpha,\beta)=\sum_{l={0}}^{\infty }\sum_{m=-l}^{l}g_{l}^{m}(\rho)\,Y_{l}^{m}(\alpha,\beta).
\end{equation}

In the case of $3$-dimensions, we have that $C_{l}^{(\frac{1}{2})}(x)=P_{l}(x)$, where $P_{l}(x)$ are the Legendre polynomials and $C_{l}^{(\frac{1}{2})}(1)=1$. Therefore the relation between the spherical harmonics coefficients in the three cases are as follows:
\begin{enumerate}
\item[] (Exterior case)
\begin{align}\label{3d-exterior-case}
\notag & g_{l}^{m}(\rho)=\int_{0}^{\rho}F_{l}^{m}(r)\,K_{l}(\rho,r) \D r,\\
& K_{l}(\rho,r)=2\pi\rho(r+1)\lb P_l\left(\frac{r^2-\rho^2+2r+2}{2(r+1)}\right)\rb\\
\notag &F_{l}^{m}(r)=f_{l}^{m}(r+1).
\end{align}
\item[] (Interior case)
\begin{align}\label{3d-interior-case}
\notag & g_{l}^{m}(\rho)=\int_{0}^{\rho}F_{l}^{m}(r) K_{l}(\rho,r) \D r,\\
& K_{l}(\rho,r)=2\pi\rho(1-r)\lb P_l \lb \frac{r^{2} -\rho^{2} + 2-2r}{2(1-r)}\rb\rb\\
\notag & F_{l}^{m}(r)=f_{l}^{m}(1-r).
\end{align}
\item[] {(Interior/exterior case)}
\begin{align}\label{3d-intext-case}
\notag & G_{l}^{m}(\rho)=\int_{0}^{\rho}F_{l}^{m}(r) K_{l}(\rho,r) \D r,\\
& K_{l}(\rho,r)=2\pi(R_{2}+1-\rho)(R_{2}-r)\lb  P_l \lb \frac{(R_{2}-r)^{2}+1 -{(R_{2}+1-\rho)}^{2}}{2(R_{2}-r)}\rb\rb\\
\notag & F_{l}^{m}(r)=f_{l}^{m}(R_{2}-r),\;\; G_l^m(\rho)=g_l^m(R_{2}+1-\rho).
\end{align}

Note, that in all three cases the kernel $K_{l}(\rho,r)$ is bounded and has a continuous first derivative on the support of (corresponding) $F_l^m$,
and $K_l(\rho,\rho)\ne 0$. Hence, these Volterra equations of the first kind can be transformed into equations of the second kind
and solved using a resolvent kernel given by Picard's process of successive approximations (see \cite{Polyanin,Tricomi}).

\end{enumerate}

\section{Numerical Algorithm}\label{numerical_algorithms}
\subsection{Generating the Radon data}
We consider a generic sphere of integration $S(\rho,\alpha,\beta)$ to be centered at $C=(a_1,b_1,c_1)$ and radius $\rho$ where the center $(a_1,b_1,c_1)$ lies on the sphere of radius $R$. For the interior and exterior cases, we choose $R=1$ and thus use the formulas \eqref{3d-exterior-case} and \eqref{3d-interior-case} derived in the previous sections. For the combined interior and exterior case, we use $R=1.49$ and note that \eqref{3d-intext-case} can be easily generalized for acquisition spheres of radius $R$. We consider test phantoms $f$ to be disjoint unions of characteristic functions of balls. To find the spherical Radon transform of $f$, we need to find the surface area of intersection of $S(\rho,\alpha,\beta)$ with $f$. This is equivalent to summing up the surface area of intersection of $S(\rho,\alpha,\beta)$ with characteristic function of each ball. Thus, in the forthcoming calculations, we consider a ball $B$ centered at $(a_2,b_2,c_2)$ and radius $a$.

The sphere $S(\rho,\A,\B)$ and the ball $B$ intersect only when the following conditions do not occur:
$$
\sqrt{(a_1-a_2)^2+(b_1-b_2)^2+(c_1-c_2)^2} > (\rho+a)
$$
and
$$
\sqrt{(a_1-a_2)^2+(b_1-b_2)^2+(c_1-c_2)^2} <\rho - a.
$$
To compute the surface area of intersection of $S(\rho,\alpha,\beta)$ with $B$ in these cases, we first determine the center of the circle of intersection of $S$ and $B$, denoted by $(x_c,y_c,z_c)$. The equation of the plane $\mathcal{P}$ passing through the intersection $S$ and $B$ is given as follows
$$
(a_2-a_1)x+(b_2-b_1)y+(c_2-c_1)y = \dfrac{\rho^2-a^2+a_2^2+b_2^2+c_2^2-a_1^2-b_1^2-c_1^2}{2}.
$$
The equation of the straight line passing through $(x_c,y_c,z_c)$ and perpendicular to the plane $\mathcal{P}$ is given as follows
$$
\dfrac{x-a_1}{a_2-a_1}=\dfrac{y-b_1}{b_2-b_1}=\dfrac{z-c_1}{c_2-c_1}=t,\qquad t\in \mathbb{R}.
$$
We then can compute
\begin{align*}
& x_c = \dfrac{(\rho^2-a^2)(a_2-a_1)}{2Z}+\dfrac{a_1+a_2}{2}, \\
& y_c = \dfrac{(\rho^2-a^2)(b_2-b_1)}{2Z}+\dfrac{b_1+b_2}{2},\\
& z_c = \dfrac{(\rho^2-a^2)(c_2-c_1)}{2Z}+\dfrac{c_1+c_2}{2},
\end{align*}
where
$Z= (a_1-a_2)^2+(b_1-b_2)^2+(c_1-c_2)^2.$
Let $d$ be the distance between $(x_c,y_c,z_c)$ from the center of $S$. Then, by an elementary calculation, the surface area of intersection of $S(\rho,\alpha,\beta)$ with $B$, denoted by $\mathcal{S}$, is given as
$$
\mathcal{S} = 2\pi\rho(\rho-d).
$$
Thus
$$
\mathcal{S}=2\pi\rho^2-2\pi\rho\cdot\frac{|\rho^2-a^2+(a_1-a_2)^2+(b_1-b_2)^2+(c_1-c_2)^2|}{2\sqrt{(a_1-a_2)^2+(b_1-b_2)^2+(c_1-c_2)^2}}.
$$

\subsection{Evaluating the spherical harmonics coefficients of the Radon data}
After obtaining the Radon data $g(\rho,\alpha,\beta)$, we need to determine $g_{l}^{m}(\rho),~ l=0,\hdots,\infty,$ $~m=-l,\hdots,l$ given by
$$
g_{l}^{m}(\rho)= \int_0^{2\pi}\int_0^\pi g(\rho,\alpha,\beta) \bar{Y}^m_l(\alpha,\beta)\D \alpha \D \beta
$$
where $\bar{Y}^m_l(\alpha,\beta) = (-1)^{-m}Y^{-m}_l(\alpha,\beta).$
This is done numerically by following the method employed in \cite{blais}. Given $g(\rho,\alpha,\beta)$ 	at $\alpha_j = \pi j/2N,\beta_k=\pi k/N, j,k=0,\hdots, 2N-1$, we compute
$$
g_l^m(\rho) = \dfrac{1}{N} \sqrt{\dfrac{\pi}{2}}\sum_{j=0}^{2N-1}\sum_{k=0}^{2N-1}a_j g(\rho,\alpha_j,\beta_k)\bar{Y}^m_l(\alpha_j,\beta_k),
$$
where
$$
a_j = \dfrac{\sqrt{2}}{N}\sin\left({\dfrac{\pi j}{2N}}\right)\sum_{p=0}^{N-1}\dfrac{1}{2p+1}\sin\left({(2p+1)\dfrac{\pi j}{2N}}\right).
$$
\subsection{Inversion of the integral equations {\bf \eqref{3d-exterior-case}}, \eqref{3d-interior-case} and \eqref{3d-intext-case}}
To solve the integral equations numerically, we use the product trapezoidal method as found in \cite{PlatoArticle, RKCV, Weiss_Product_Integration_Paper}. In this method, the integral equations are discretized and the integrands are approximated by product trapezoidal rule. This in turn leads to matrix-vector equations and thus, we obtain  discrete solutions of the discretized integral equations, provided the matrices are invertible. In the following section, we provide the corresponding matrix-vector equations for solving \eqref{3d-exterior-case}, \eqref{3d-interior-case} and \eqref{3d-intext-case} corresponding to the exterior, interior and the combined interior/exterior problems respectively and prove their invertibility.
\subsubsection{Exterior case}
We rewrite \eqref{3d-exterior-case} as
$$
g_{l}^{m}(\rho)=\int_{0}^{\rho}f_{l}^{m}(r+1)(1+r)\wt{K}_l(\rho,r) \D r,
$$
where
\[
\wt{K}(\rho,r)=2\pi\rho P_l\left(\frac{r^2-\rho^2+2r+2}{2r+2}\right),~ 0<\rho <1.
\]
We discretize $\rho \in (0,1)$ into $M+1$ equidistant points of interval length $h$ as $\rho_i,~ i = 0,\hdots, M$. The corresponding matrix-vector equation is given as follows

\begin{equation}\label{Mat_ext}
A_E\vec{f}_l^m = \vec{g}_{l}^{m}
\end{equation}
where
$$
\vec{f}_l^m = \left(
\begin{array}{c}
f_l^m(1+\rho_0)\\
\vdots\\
f_l^m(1+\rho_M)\\
\end{array}
\right),~ \vec{g}_l^m = \left(
\begin{array}{c}
g_l^m(\rho_0)\\
\vdots\\
g_l^m(\rho_M)\\
\end{array}
\right)
$$
and $A_E = (a_{ik})$ where
$$
a_{ik} =
\begin{cases}
\wt{K}_l(\rho_i,\rho_0)\left[{\frac{h(\rho_{1}+\rho_{0})}{6}+\frac{h\rho_{0}}{6}+\frac{h}{2}}\right],~ k=0\\
\wt{K}_l(\rho_i,\rho_k)\left[{\frac{h(\rho_{k-1}+4\rho_{k}+\rho_{k+1})}{6}+h}\right], ~1\leq k\leq i-1\\
\wt{K}_l(\rho_i,\rho_i)\left[{\frac{h(\rho_{i}+\rho_{i-1})}{6}+\frac{h\rho_{i}}{6}+\frac{h}{2}}\right],~ k =i\\
0,~ k>i.
\end{cases}
$$
Note that $A_{E}$ is a lower triangular matrix, and since
\begin{equation}
\begin{aligned}
a_{00}=&\wt{K}_l(\rho_0,\rho_0)\left[{\frac{h(\rho_{1}+\rho_{0})}{6}+\frac{h\rho_{0}}{6}+\frac{h}{2}}\right]= 2\pi\rho_0 \left[{\frac{h(\rho_{1}+2 \rho_{0}+3)}{6}}\right] >0, ~ \mbox{ if } \rho_0 >0.\\
a_{ii}=&\wt{K}_l(\rho_i,\rho_i)\left[{\frac{h(\rho_{i}+\rho_{i-1})}{6}+\frac{h\rho_{i}}{6}+\frac{h}{2}}\right]
= 2\pi\rho_i \left[{\frac{h(2\rho_{i}+\rho_{i-1}+3)}{6}}\right] >0,
\end{aligned}
\end{equation}
we have that $A_E$ is invertible.

\subsubsection{Interior Case}
We again discretize $\rho \in (0,1)$ into $M+1$ equidistant points as $\rho_i,~ i = 0,\hdots, M$ and obtain the following matrix-vector equation
\begin{equation}\label{Mat_int}
 A_I\vec{f}_l^m = \vec{g}_l^m
\end{equation}
where
$$
\vec{f}_l^m = \left(
\begin{array}{c}
f_l^m(1-\rho_0)\\
\vdots\\
f_l^m(1-\rho_M)\\
\end{array}
\right),~ \vec{g}_l^m = \left(
\begin{array}{c}
g_l^m(\rho_0)\\
\vdots\\
g_l^m(\rho_M)\\
\end{array}
\right)
$$
and $A_I = (a_{ik})$ where
$$
a_{ik} =
\begin{cases}
\wt{K}_l(\rho_i,\rho_0)\left[{\frac{-h(\rho_{1}+\rho_{0})}{6}+\frac{-h\rho_{0}}{6}+\frac{h}{2}}\right],~ k=0\\
\wt{K}_l(\rho_i,\rho_k)\left[{\frac{-h(\rho_{k-1}+4\rho_{k}+\rho_{k+1})}{6}+h}\right], ~1\leq k\leq i-1\\
\wt{K}_l(\rho_i,\rho_i)\left[{\frac{-h(\rho_{i}+\rho_{i-1})}{6}+\frac{-h\rho_{i}}{6}+\frac{h}{2}}\right],~ k =i\\
0,~ k>i.
\end{cases}
$$
Therefore, if $\rho_0 >0$
\begin{equation}
\begin{aligned}
a_{00}=&\wt{K}_l(\rho_0,\rho_0)\left[{\frac{-h(\rho_{1}+\rho_{0})}{6}+\frac{-h\rho_{0}}{6}+\frac{h}{2}}\right]
= 2\pi\rho_0 \left[{\frac{-h(\rho_{1}+2 \rho_{0}-3)}{6}}\right] >0.\\
a_{ii}=&\wt{K}_l(\rho_i,\rho_i)\left[{\frac{-h(\rho_{i}+\rho_{i-1})}{6}+\frac{-h\rho_{i}}{6}+\frac{h}{2}}\right]
= 2\pi\rho_i \left[{\frac{-h(2\rho_{i}+\rho_{i-1}-3)}{6}}\right] >0.
\end{aligned}
\end{equation}
Thus $A_I$ is invertible.

\subsubsection{Interior/Exterior Case}
In a similar way as in the previous two cases, we discretize $\rho \in (0,2R)$ into $M+1$ equidistant points as $\rho_i, ~i = 0,\hdots, M$ to obtain the following matrix-vector equation
\begin{equation}\label{Mat_intext}
 A_{IE}\vec{f}_l^m = \vec{g}_l^m
\end{equation}
where
$$
\vec{f}_l^m = \left(
\begin{array}{c}
f_l^m(R_2-\rho_0)\\
\vdots\\
f_l^m(R_2-\rho_M)\\
\end{array}
\right),~ \vec{g}_l^m = \left(
\begin{array}{c}
g_l^m(\rho_0)\\
\vdots\\
g_l^m(\rho_M)\\
\end{array}
\right)
$$
and $A_{IE} = (a_{ik})$ where
$$
a_{ik} =
\begin{cases}
\wt{K}_l(\rho_i,\rho_0)\left[{\frac{-h(\rho_{1}+\rho_{0})}{6}+\frac{-h\rho_{0}}{6}+\frac{hR_2}{2}}\right],~ k=0\\
\wt{K}_l(\rho_i,\rho_k)\left[{\frac{-h(\rho_{k-1}+4\rho_{k}+\rho_{k+1})}{6}+hR_2}\right], ~1\leq k\leq i-1\\
\wt{K}_l(\rho_i,\rho_i)\left[{\frac{-h(\rho_{i}+\rho_{i-1})}{6}+\frac{-h\rho_{i}}{6}+\frac{hR_2}{2}}\right],~ k =i\\
0,~ k>i.
\end{cases}
$$
Therefore, if $\rho_0 >0$
\begin{equation}
\begin{aligned}
a_{00}=&\wt{K}_l(\rho_0,\rho_0)\left[{\frac{-h(\rho_{1}+\rho_{0})}{6}+\frac{-h\rho_{0}}{6}+\frac{hR_2}{2}}\right]
= 2\pi\rho_0 \left[{\frac{-h(\rho_{1}+2 \rho_{0}-3R_2)}{6}}\right] >0.\\
a_{ii}=&\wt{K}_l(\rho_i,\rho_i)\left[{\frac{-h(\rho_{i}+\rho_{i-1})}{6}+\frac{-h\rho_{i}}{6}+\frac{hR_2}{2}}\right]
= 2\pi\rho_i \left[{\frac{-h(2\rho_{i}+\rho_{i-1}-3R_2)}{6}}\right] >0.
\end{aligned}
\end{equation}
Thus $A_{IE}$ is invertible.

The following theorem states the error estimate for the numerical solution of the integral equations \eqref{3d-exterior-case}, \eqref{3d-interior-case} and \eqref{3d-intext-case} which follows from \cite[Thm. 7.2]{Linz-Book}.
\begin{theorem}[Error Estimates]\label{Error Estimates}
Let $f_{l}^{m,\mathrm{exact}}$ be the $C^{3}$ solution of \eqref{3d-exterior-case} [\eqref{3d-interior-case}, \eqref{3d-intext-case} in $[0,R]$ and $f_{l}^m$ be the solution to \ref{Mat_ext} [or (\ref{Mat_int}) and (\ref{Mat_intext}) resp.]. Then
\Beq
 \Vert f_{l}^{m,\mathrm{exact}} - f_{l}^m\rVert_2= \Oc(h^2),
\Eeq
where $\Vert\cdot\rVert_2$ represents the discrete version of the continuous $L^2$ norm in $[0,R]$ (see for e.g., \cite[Ch. 4]{book_L2norm}).
\end{theorem}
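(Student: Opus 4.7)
The plan is to verify the hypotheses of \cite[Thm.~7.2]{Linz-Book} for each of the three integral equations and then invoke it directly. First I would argue that the exact solution $f_l^{m,\mathrm{exact}}$ indeed lies in $C^3([0,R])$: since $f$ is a $C^\infty$ function of compact support, each spherical harmonic coefficient $f_l^m(r)$ is smooth on its support (as in the uniform convergence argument from \cite{Kalf_Paper} already invoked), and the affine substitutions $r\mapsto 1+r$, $r\mapsto 1-r$, $r\mapsto R_2-r$ used to define $F_l^m$ in \eqref{3d-exterior-case}--\eqref{3d-intext-case} preserve this regularity. Hence $F_l^m\in C^3([0,R])$, which is what the quadrature error analysis requires.

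Second, I would verify the two structural conditions on the kernels $K_l(\rho,r)$: smoothness in both variables and non-vanishing of the diagonal $K_l(\rho,\rho)$. Smoothness is immediate, since in each case $K_l$ is a Legendre polynomial evaluated at a smooth rational function of $\rho$ and $r$, multiplied by smooth prefactors on the relevant closed rectangle. The diagonal condition $K_l(\rho,\rho)\neq 0$ on the interval of interest has effectively been established already in the positivity computations for $a_{ii}$ that guarantee invertibility of $A_E$, $A_I$, $A_{IE}$.

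The structural heart of the argument is the standard reduction of a first-kind Volterra equation $g(\rho)=\int_0^\rho K(\rho,r)F(r)\,\D r$ to the equivalent second-kind form
\[
F(\rho)=\frac{1}{K(\rho,\rho)}\left[g'(\rho)-\int_0^\rho \frac{\PD K}{\PD \rho}(\rho,r)F(r)\,\D r\right],
\]
which is well posed thanks to the non-vanishing diagonal. The product trapezoidal scheme encoded in \eqref{Mat_ext}--\eqref{Mat_intext} inherits the $\Oc(h^2)$ local truncation error of the trapezoidal quadrature; this is precisely where the assumed $C^3$ regularity of $f_l^{m,\mathrm{exact}}$ enters, in order to bound the remainder involving second derivatives of the product $K_l(\rho,\cdot)F_l(\cdot)$. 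A discrete Gronwall-type argument then propagates this local bound through the lower triangular system, yielding a uniform (and hence discrete-$L^2$) global error of size $\Oc(h^2)$. All of these ingredients are packaged in \cite[Thm.~7.2]{Linz-Book}, from which the conclusion follows in each of the three cases.

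The main obstacle is not the analysis but the bookkeeping. One must confirm that the particular weights in our entries $a_{ik}$ for the three stencil regions $k=0$, $1\le k\le i-1$, $k=i$ coincide with the weights of the product trapezoidal scheme covered by Linz's theorem, and that the extra factor $\rho^{n-2}$ present in each $K_l$ (which is benign for $n=3$ but would introduce a mild weakly singular behavior at $\rho=0$ for larger $n$) is compatible with the standing assumption $\rho_0>0$ already needed to make the discretization matrices invertible. Once this matching is in place, the cited theorem applies verbatim and delivers the claimed $\Oc(h^2)$ estimate in the discrete $L^2$ norm described in \cite[Ch.~4]{book_L2norm}.
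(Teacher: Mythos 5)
Your proposal is correct and follows the same route as the paper, which offers no argument beyond the citation of \cite[Thm.~7.2]{Linz-Book}; your verification of the hypotheses (smoothness of $F_l^m$ inherited from $f\in C^\infty$, smoothness of the kernels, the non-vanishing diagonal already checked in the positivity of the $a_{ii}$, and the matching of the $a_{ik}$ weights to the product trapezoidal stencil) is exactly the bookkeeping the paper leaves implicit. One small caveat: even for $n=3$ the diagonal $K_l(\rho,\rho)$ carries a factor of $\rho$ and so vanishes at $\rho=0$, which is why the estimate really holds on $[\delta,R]$ with $\rho_0>0$ rather than on all of $[0,R]$ --- a limitation of the theorem as stated rather than of your argument, and one you correctly flag via the standing assumption $\rho_0>0$.
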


To solve the matrix equations (\ref{Mat_ext}), (\ref{Mat_int}) and (\ref{Mat_intext}), we need to
invert the matrices $A_E, A_I, A_{IE}$. It turns out that the condition numbers of these matrices are greater than $10^4$ for almost all values of $l,m$. It is well known that numerically inverting a matrix with condition number$10^r$ leads to a loss of
$r$ digits of accuracy \cite{Hansen-TSVD}. Thus for inversion, we use the technique of Truncated Singular
Value Decomposition (TSVD), originally proposed in \cite{Golub-Kahan}. See also \cite{AR,RKCV}.

\section{Numerical Results}\label{num_res}
In this section we show the results of the numerical computations performed for the inversion of spherical transforms described in Section \ref{main_results} with functions supported in interior, exterior and both interior and exterior of the acquisition sphere. We discretize $\rho\in[\epsilon,R-\epsilon]$, with $\epsilon=0.001$, into 50 equally spaced grid points, $\alpha,\theta\in [0,\pi]$ and $\beta,\phi\in [0,2\pi]$ into 100 equally spaced grid points for all our computations. As mentioned before, for the interior and the exterior cases, the value of $R=1$ whereas for the combined interior and exterior case, the value of $R=1.49$.

\subsection{Functions supported inside the acquisition sphere}
\begin{figure}[h]
  \centering
    \subfloat[]{%
     \label{interior_phantom_exact_hor} \includegraphics[scale=0.33,keepaspectratio]{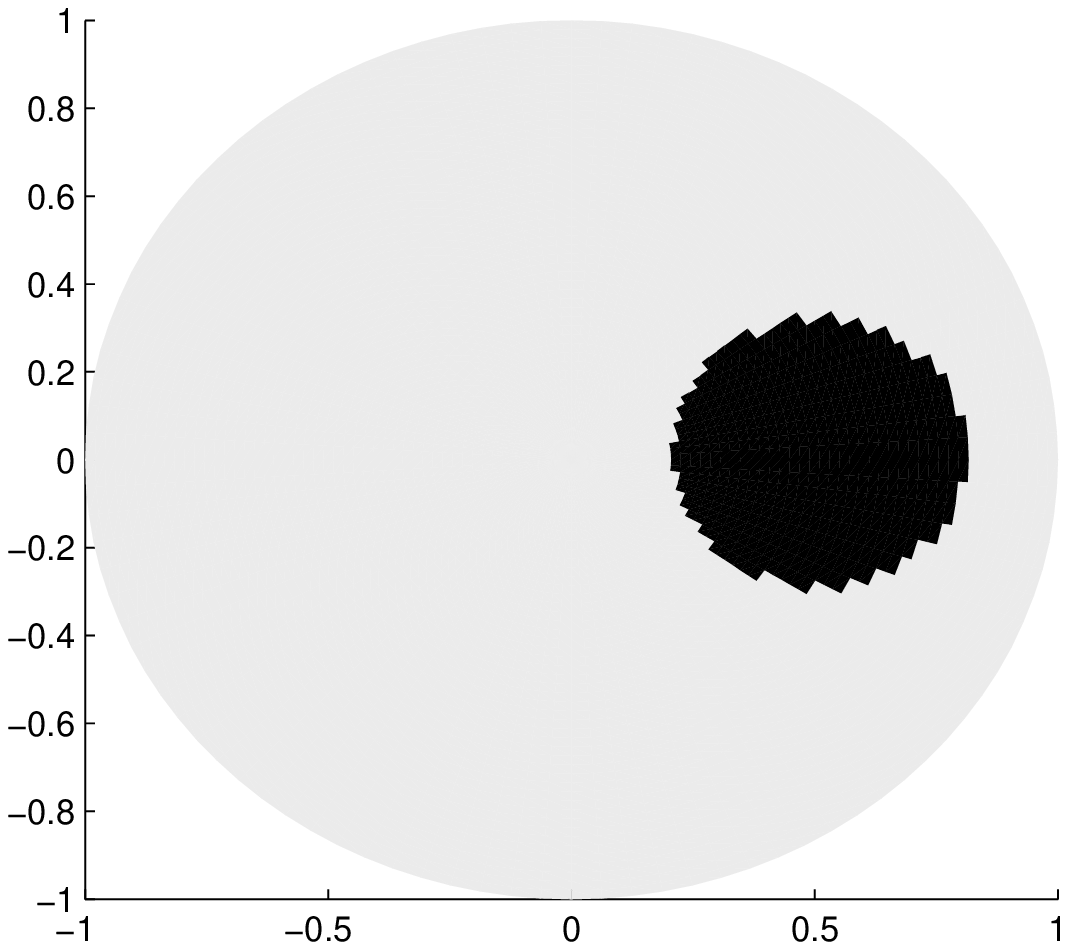}
   }
    \subfloat[]{%
      \label{interior_phantom_exact_ver}\includegraphics[scale=0.33,keepaspectratio]{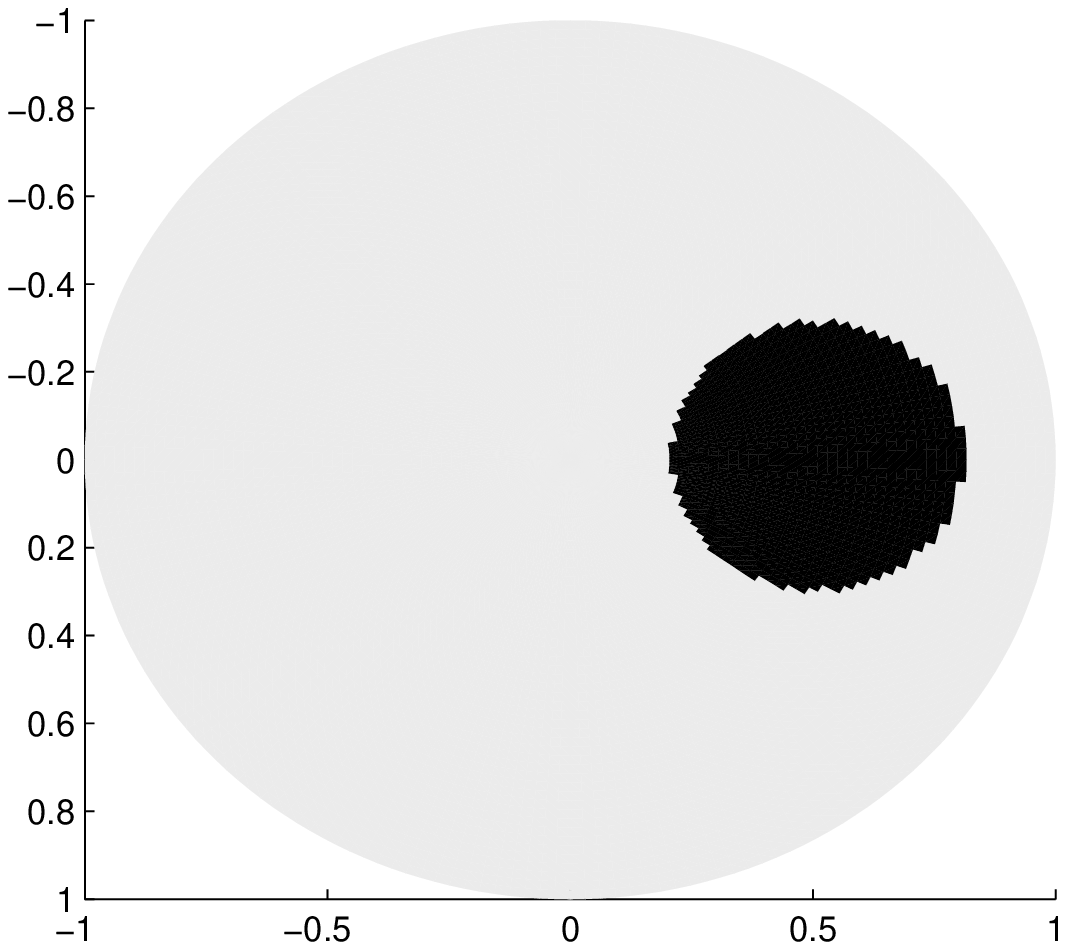}
      }\\
      \subfloat[]{%
      \label{Reconstruction_interior_hor}\includegraphics[scale=0.33,keepaspectratio]{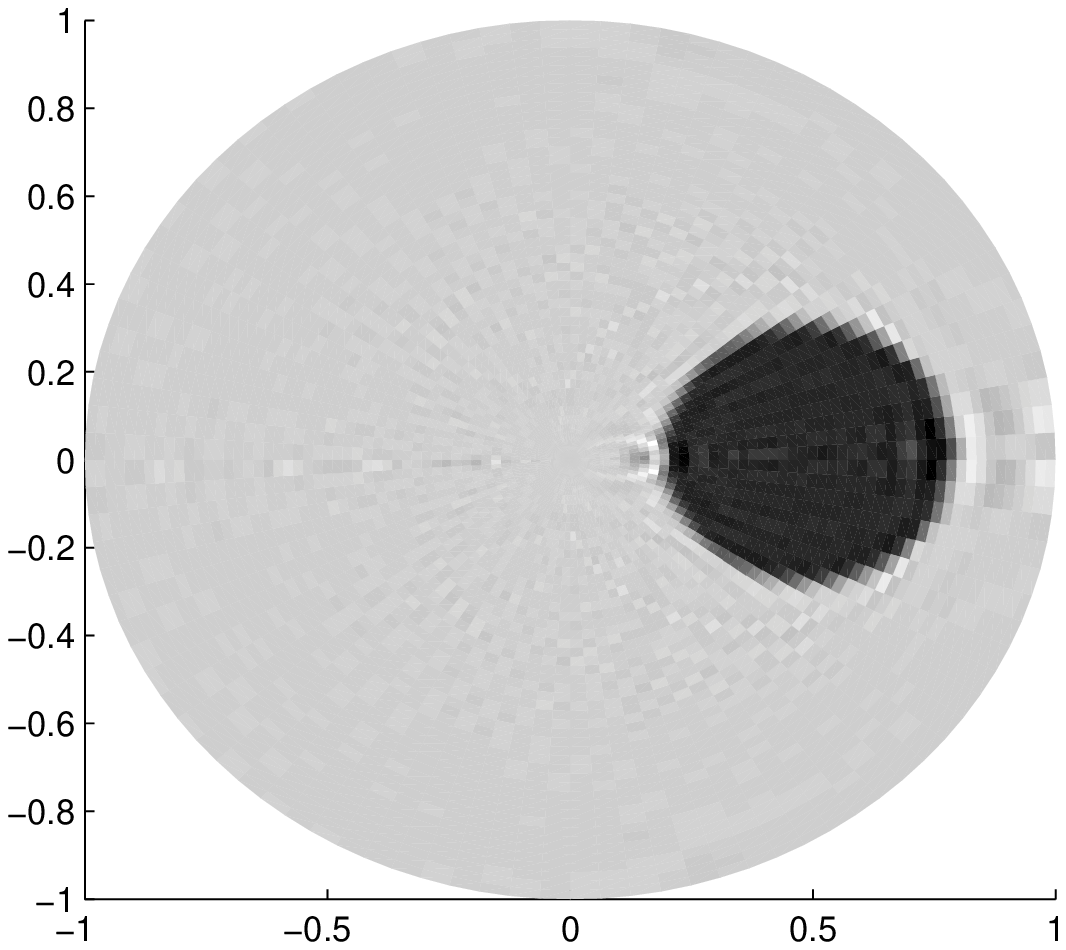}
      }
            \subfloat[]{%
      \label{Reconstruction_interior_ver}\includegraphics[scale=0.33,keepaspectratio]{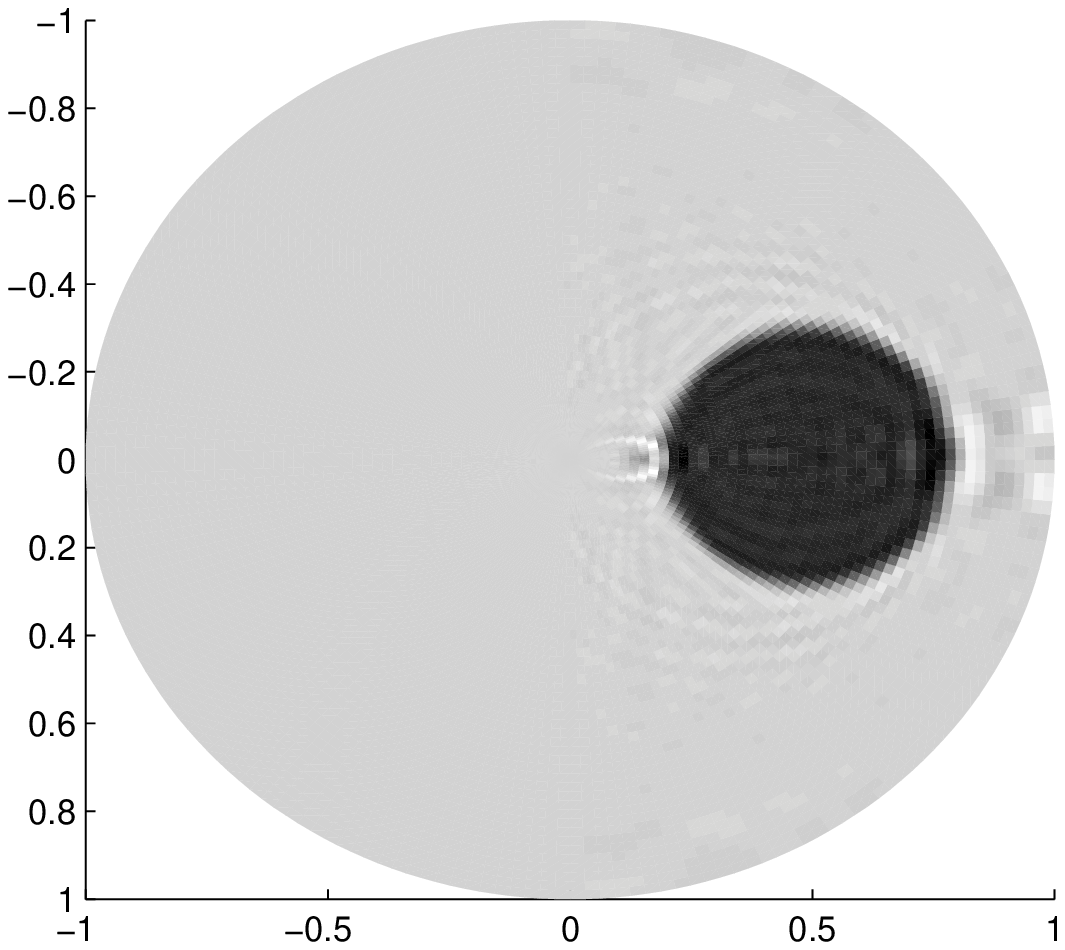}
}
      \caption{Results for spherical Radon transform data for a function supported inside the acquisition sphere. Figures \ref{interior_phantom_exact_hor} and \ref{interior_phantom_exact_ver} represent the horizontal and the vertical views of the actual phantom. Figures \ref{Reconstruction_interior_hor} and \ref{Reconstruction_interior_ver} show the horizontal and vertical views of the reconstructed images.}
      \label{fig:interior}
      \end{figure}

Figures \ref{interior_phantom_exact_hor} and \ref{interior_phantom_exact_ver} show the horizontal and vertical cross sections of a phantom represented by a ball centered at $(0.5,0,0)$ and radius 0.3. Figure \ref{Reconstruction_interior_hor} and \ref{Reconstruction_interior_ver} shows the horizontal and the vertical cross sections of the reconstructed phantom. We note the good recovery in this case.

To demonstrate the robustness of our algorithm, we also tested it on the spherical Radon data with 5\% multiplicative Gaussian noise. The results are shown in Figures \ref{Reconstruction_interior_noise_hor} and  \ref{Reconstruction_interior_noise_ver}. We again note the good recovery in presence of noisy data.

\begin{figure}[h]
  \centering

     \subfloat[]{%
      \label{Reconstruction_interior_noise_hor}\includegraphics[scale=0.33,keepaspectratio]{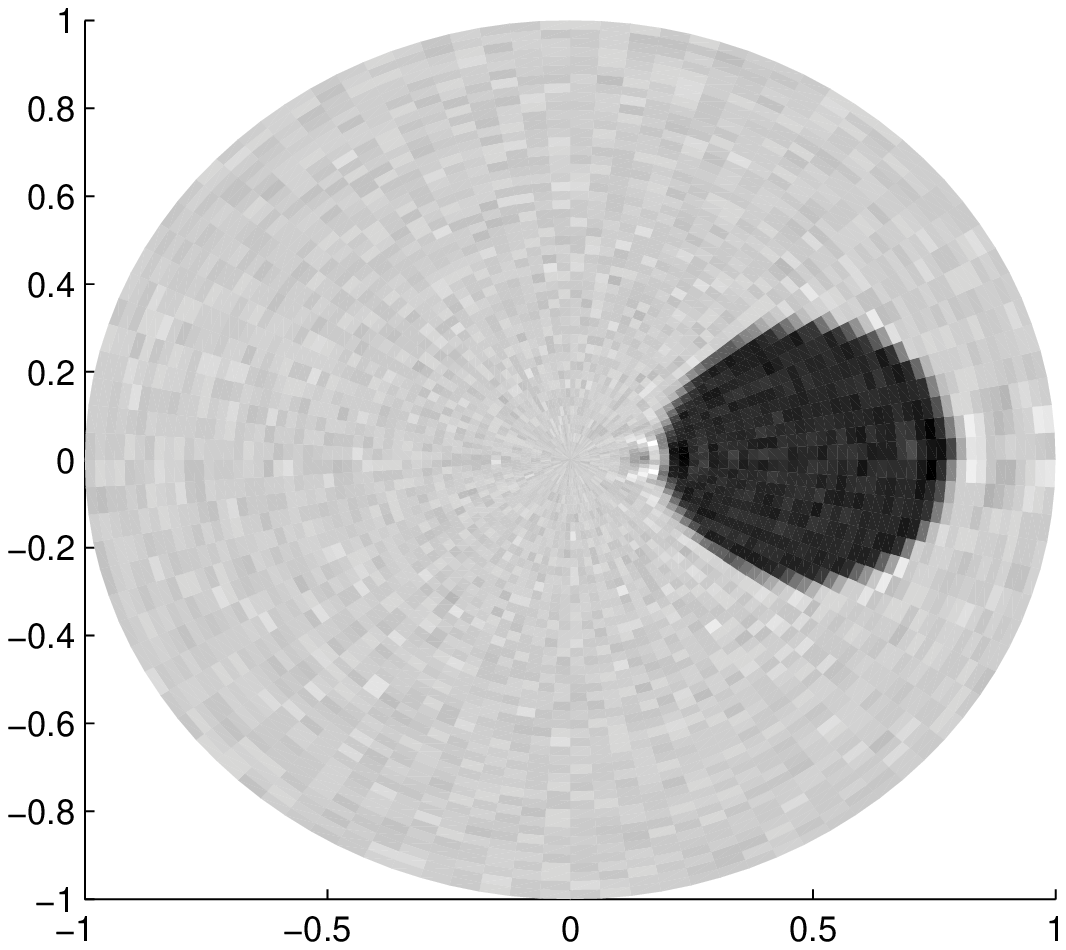}
      }
            \subfloat[]{%
      \label{Reconstruction_interior_noise_ver}\includegraphics[scale=0.33,keepaspectratio]{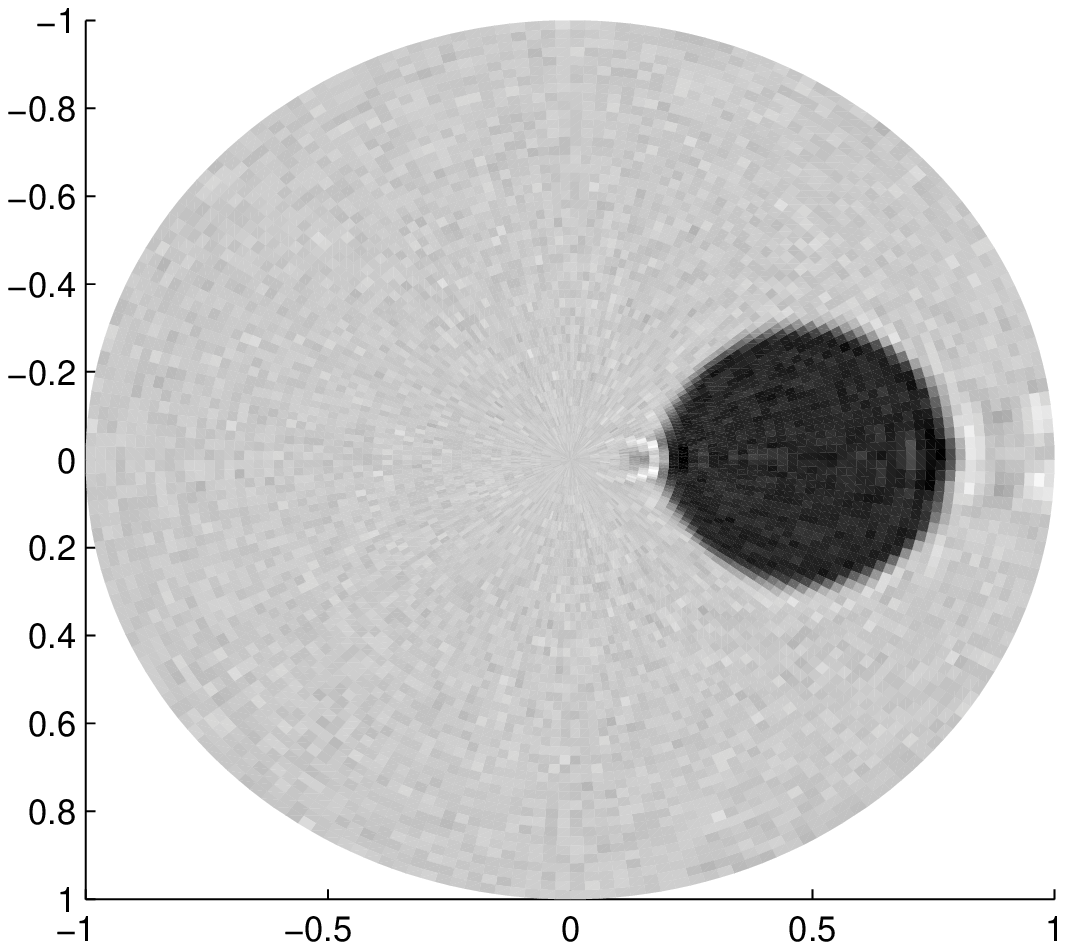}
}
      \caption{Results for spherical Radon transform data with 5\% multiplicative Gaussian noise for a function supported inside the acquisition sphere. Figures \ref{Reconstruction_interior_noise_hor} and \ref{Reconstruction_interior_noise_ver} show the horizontal and vertical views of the reconstructed images.}
      \label{fig:interior_noise}
      \end{figure}

We also applied our reconstruction algorithm to a phantom whose support is inside the acquisition sphere and contains the origin. Notice, that our result about uniqueness of the inversion (Thm. \ref{thm:int}) does not cover this case, since here the kernels $K_l(\rho,r)$ of the integral equations appearing in the proof vanish, when $\rho=r=1$ (see eq. (\ref{3d-interior-case})). Hence, one may not expect stable recovery in the numerical method. And indeed, the reconstructed image in Figure \ref{Reconstruction_interior_origin_hor} shows instability near the origin.
\begin{figure}[h]
  \centering
    \subfloat[]{%
     \label{interior_phantom_exact_origin_hor} \includegraphics[scale=0.33,keepaspectratio]{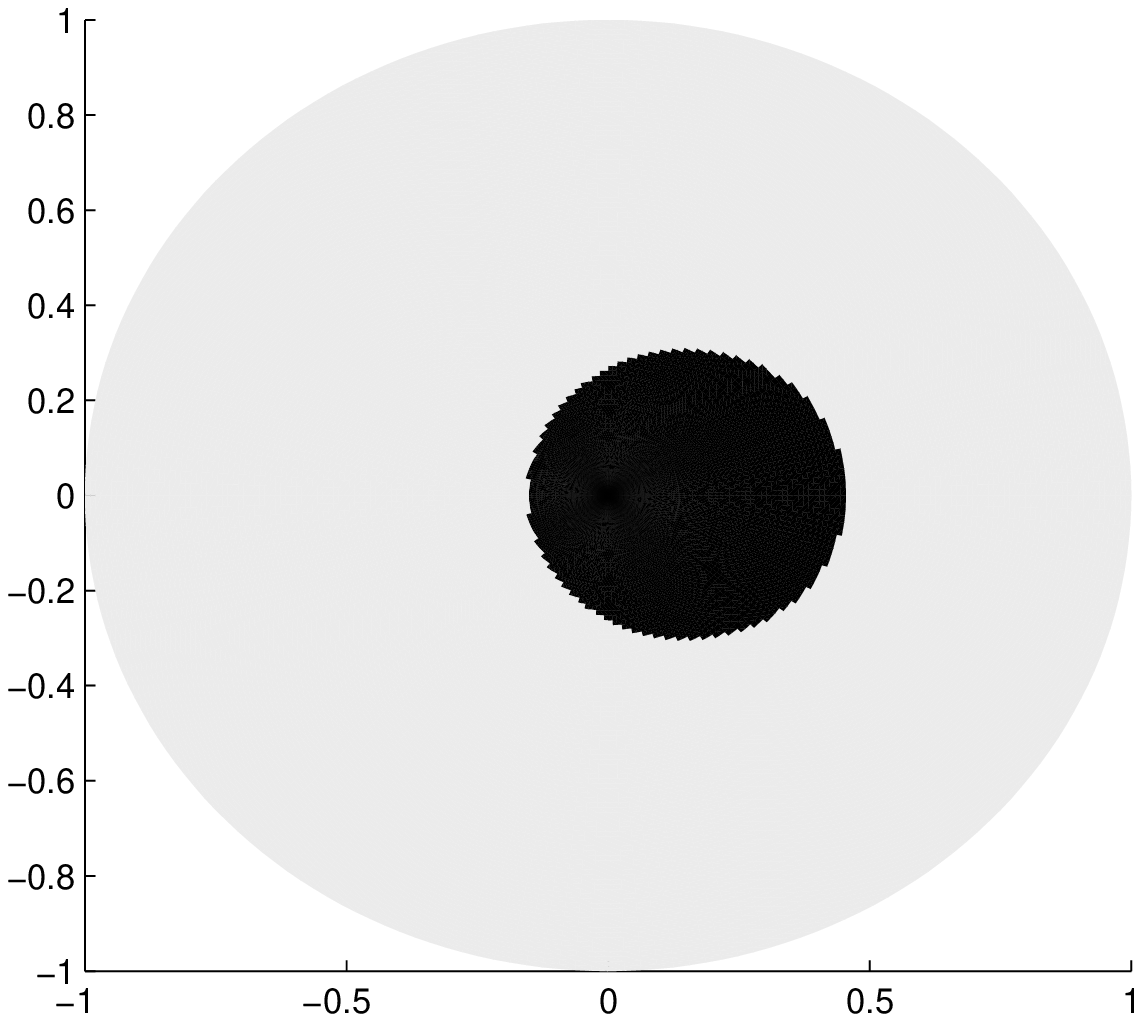}
   }
     \subfloat[]{%
      \label{Reconstruction_interior_origin_hor}\includegraphics[scale=0.33,keepaspectratio]{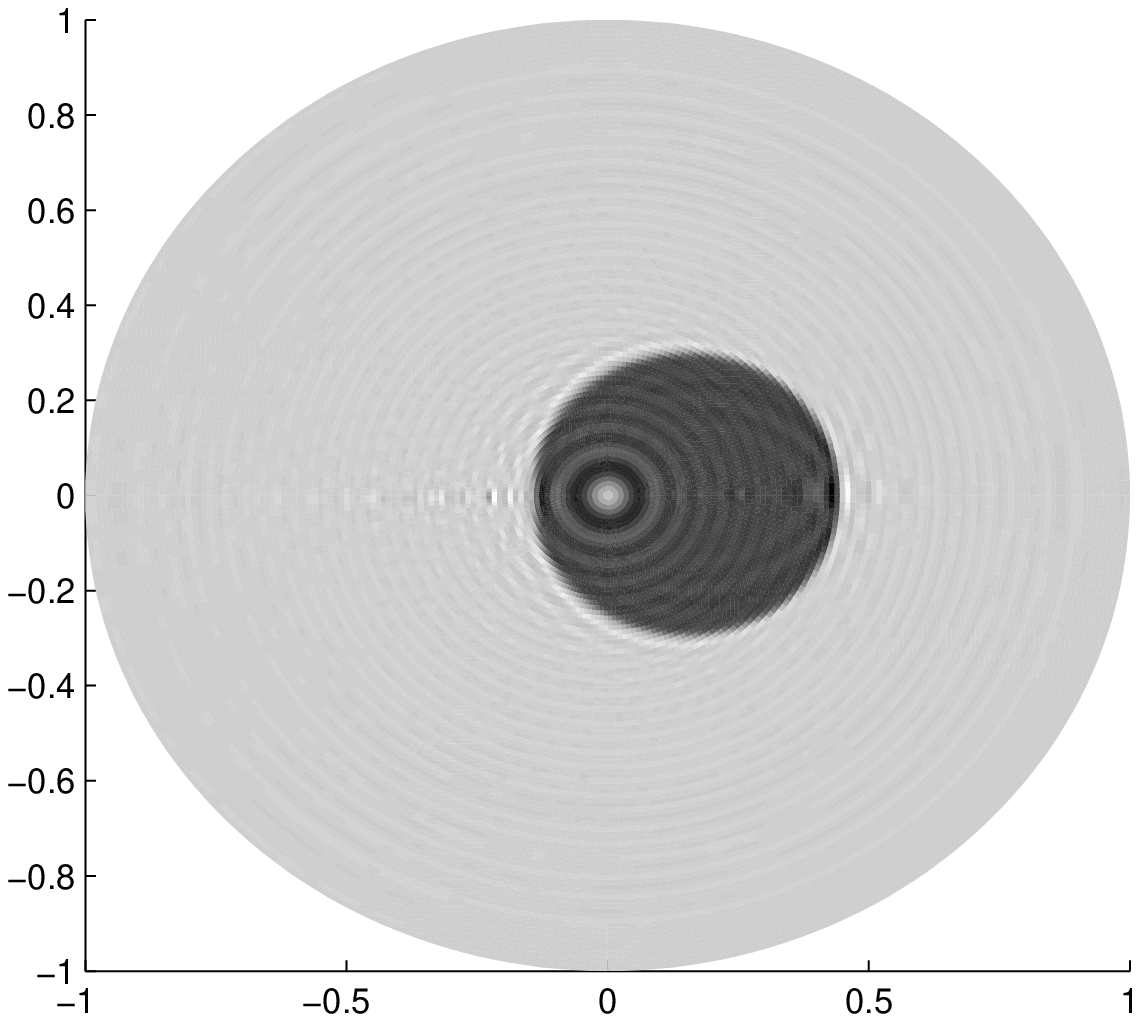}
      }
      \subfloat[]{%
      \label{Reconstruction_interior_origin_cross_section}\includegraphics[scale=0.33,keepaspectratio]{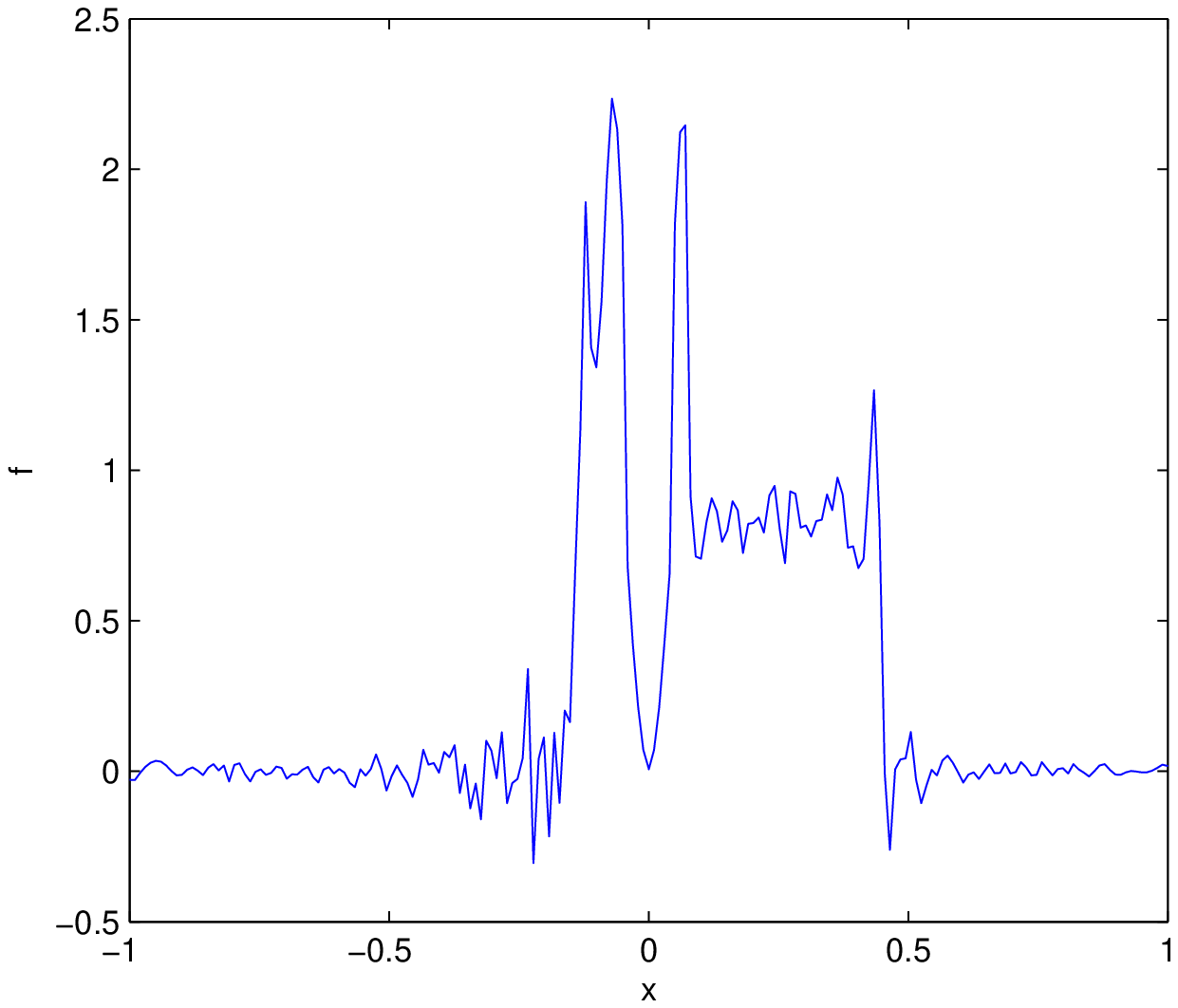}
      }

      \caption{Application of the algorithm to a function supported inside the acquisition sphere with support containing the origin. Figure \ref{interior_phantom_exact_origin_hor} shows the horizontal view of the actual phantom. Figure \ref{Reconstruction_interior_origin_hor} shows the horizontal view of the reconstructed image. Figure \ref{Reconstruction_interior_origin_cross_section} shows the cross sectional view of the reconstructed phantom along the x-axis. }
      \label{fig:interior_origin}
      \end{figure}
\subsection{Functions supported outside the acquisition sphere}
\begin{figure}[h]
  \centering
    \subfloat[]{%
     \label{exterior_phantom_exact_hor} \includegraphics[scale=0.33,keepaspectratio]{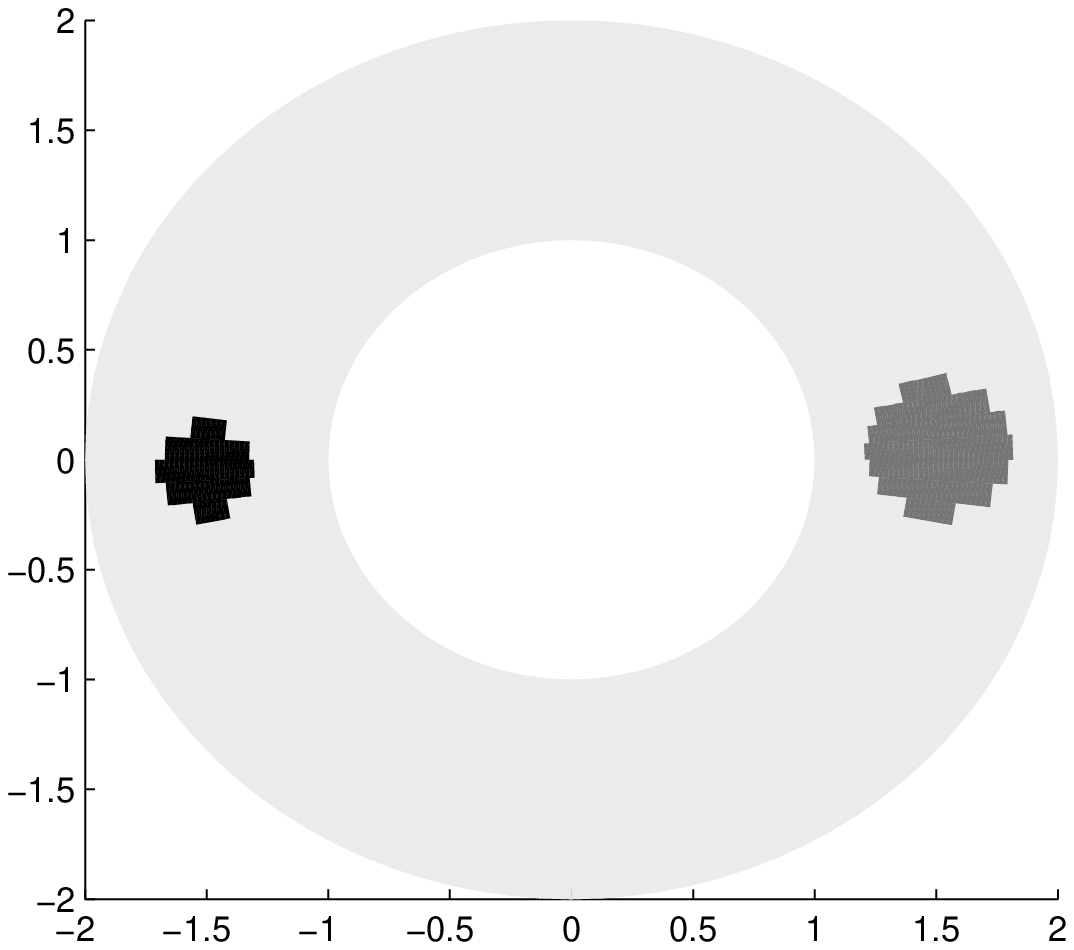}
   }
    \subfloat[]{%
      \label{exterior_phantom_exact_ver}\includegraphics[scale=0.33,keepaspectratio]{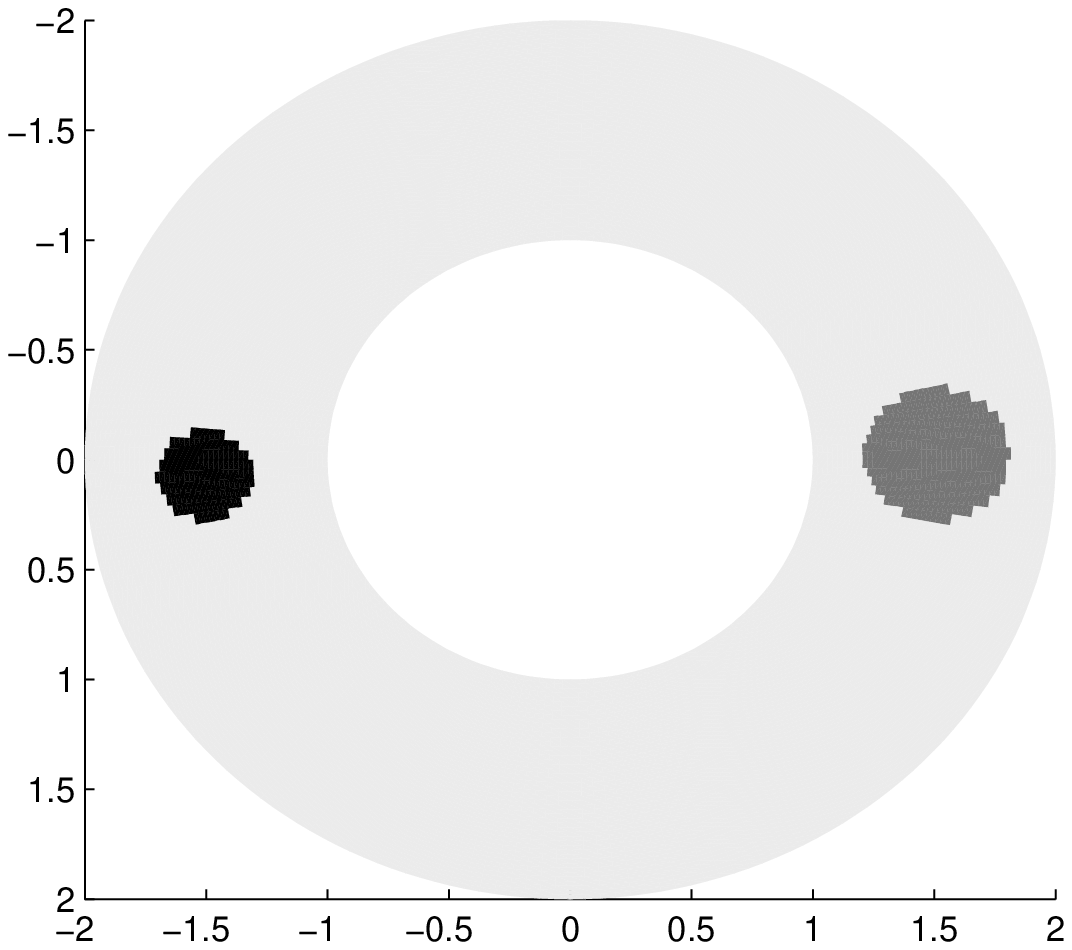}
      }\\
      \subfloat[]{%
      \label{Reconstruction_exterior_hor}\includegraphics[scale=0.33,keepaspectratio]{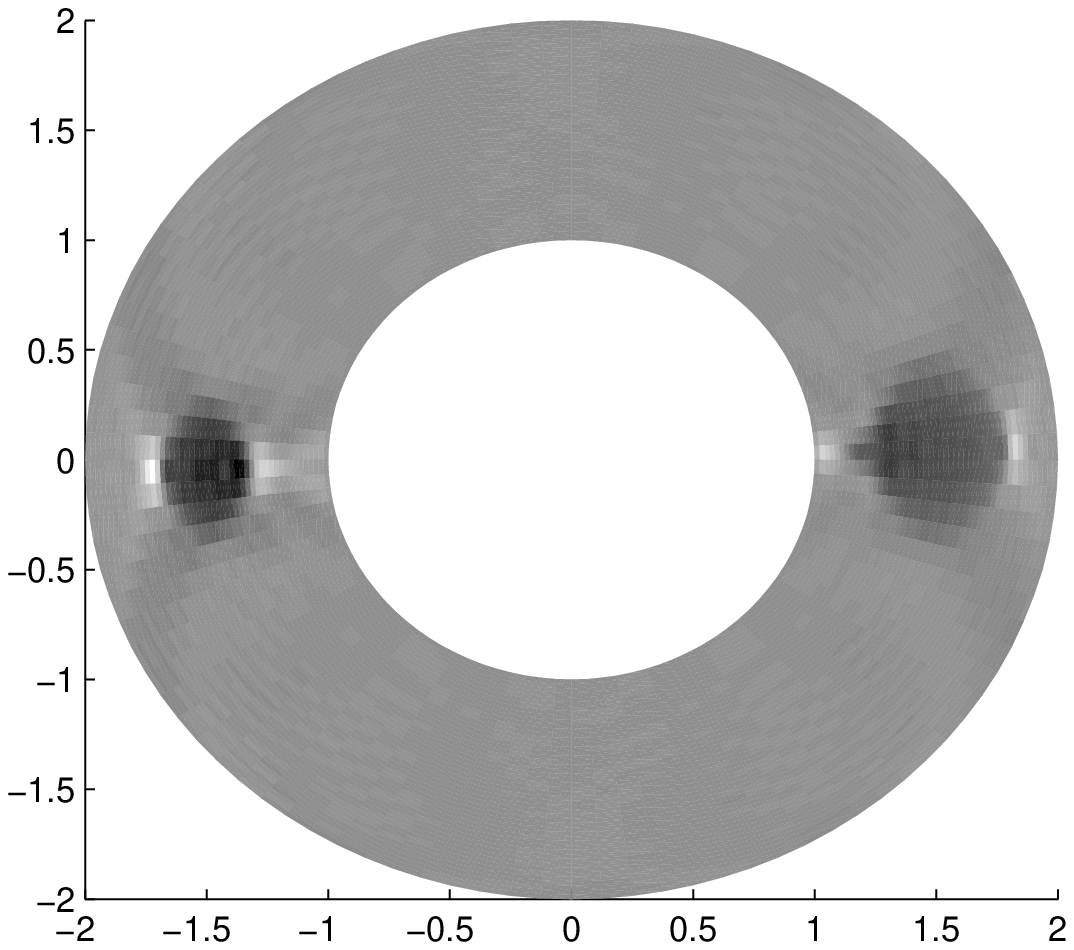}
      }
            \subfloat[]{%
      \label{Reconstruction_exterior_ver}\includegraphics[scale=0.33,keepaspectratio]{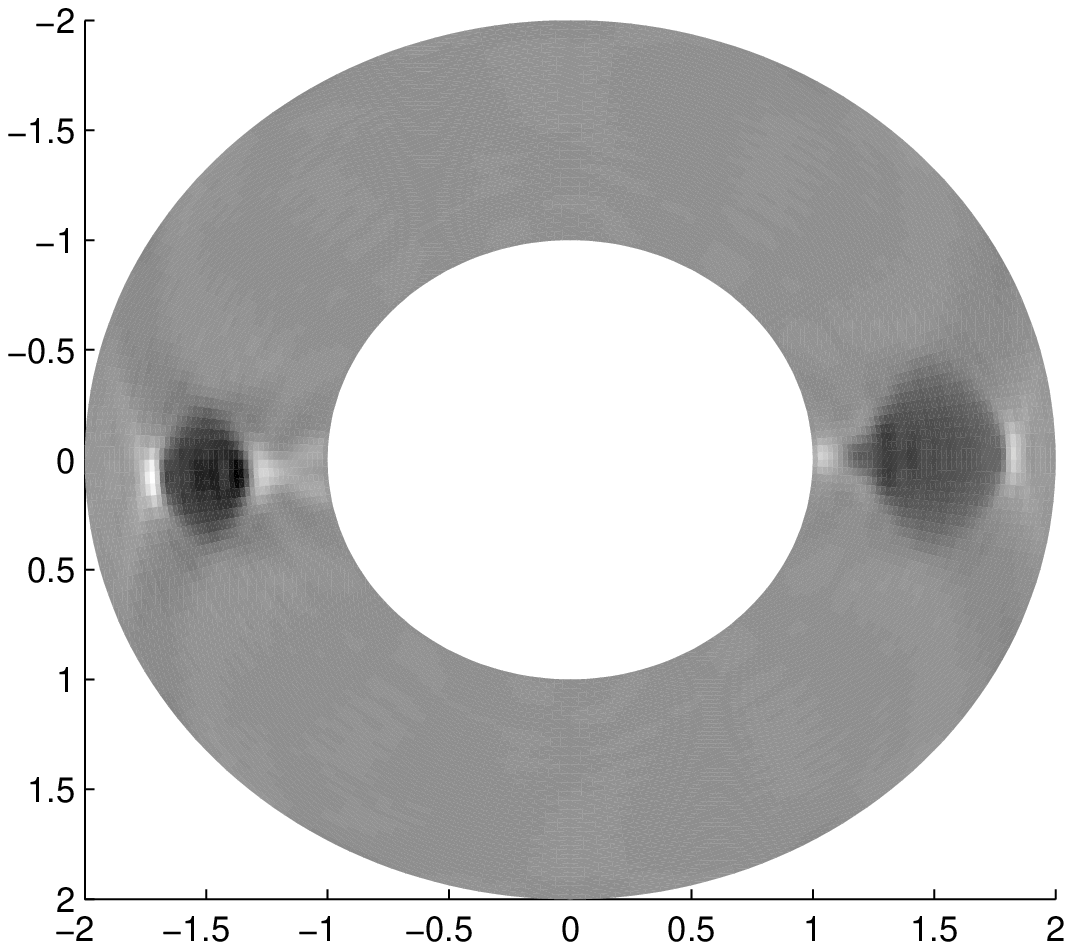}
}
      \caption{Results for spherical Radon transform data for a function supported outside the acquisition sphere. Figures \ref{exterior_phantom_exact_hor} and \ref{exterior_phantom_exact_ver} represent the horizontal and the vertical views of the actual phantom. Figures \ref{Reconstruction_exterior_hor} and \ref{Reconstruction_exterior_ver} show the horizontal and vertical views of the reconstructed images.}
      \label{fig:exterior1}
      \end{figure}

Figures \ref{exterior_phantom_exact_hor} and \ref{exterior_phantom_exact_ver} show the horizontal and vertical cross sections of a phantom represented by two balls centered at $(-1.5,0,0)$ and $(1.5,0,0)$ with radius 0.2 and 0.3 respectively. Figures \ref{Reconstruction_exterior_hor} and \ref{Reconstruction_exterior_ver} shows the horizontal and the vertical cross sections of the reconstructed phantom. Microlocal analysis arguments show that the entire spherical shell of the balls cannot be constructed stably with the given spherical Radon transform data.
We see the presence of an increased number of artifacts in contrast to the interior case. The reconstructions are consistent with this analysis.

\subsection{Functions supported on both sides of the acquisition sphere}

\begin{figure}[h]
  \centering
    \subfloat[]{%
     \label{interior_exterior_phantom_exact_hor} \includegraphics[scale=0.33,keepaspectratio]{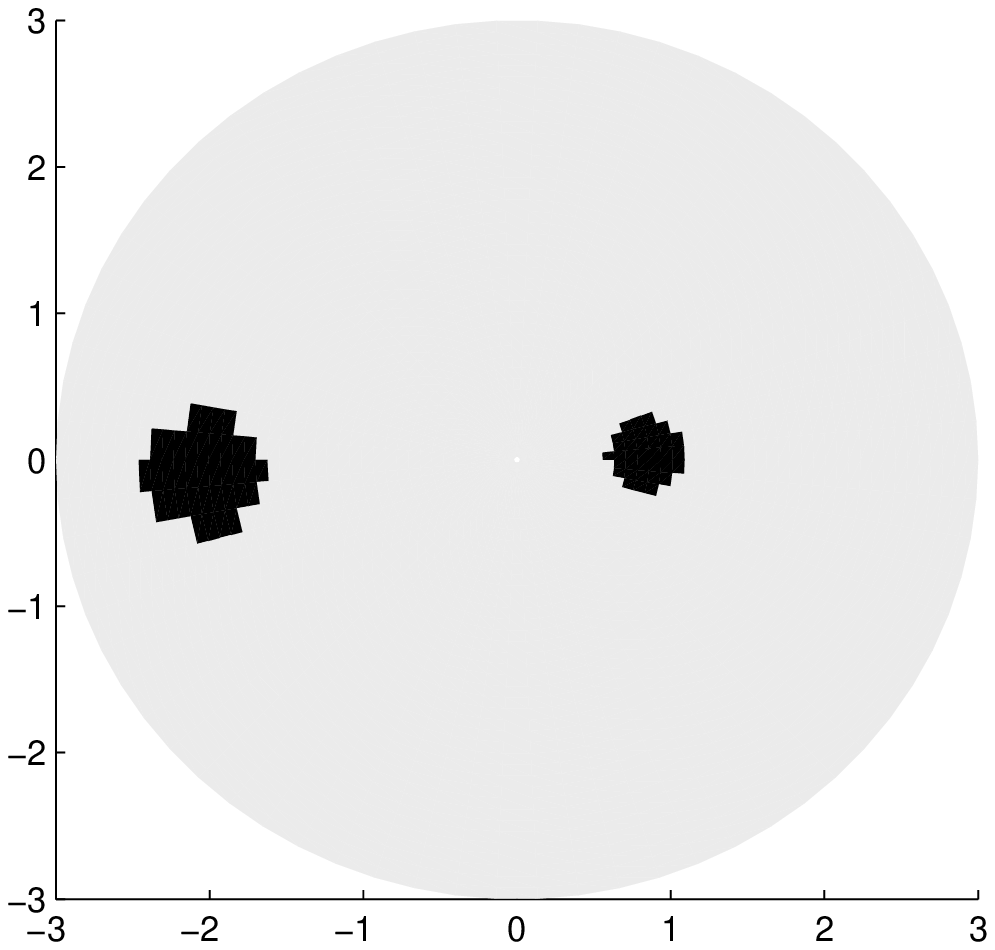}
   }
    \subfloat[]{%
      \label{Reconstruction_interior_exterior_hor}\includegraphics[scale=0.33,keepaspectratio]{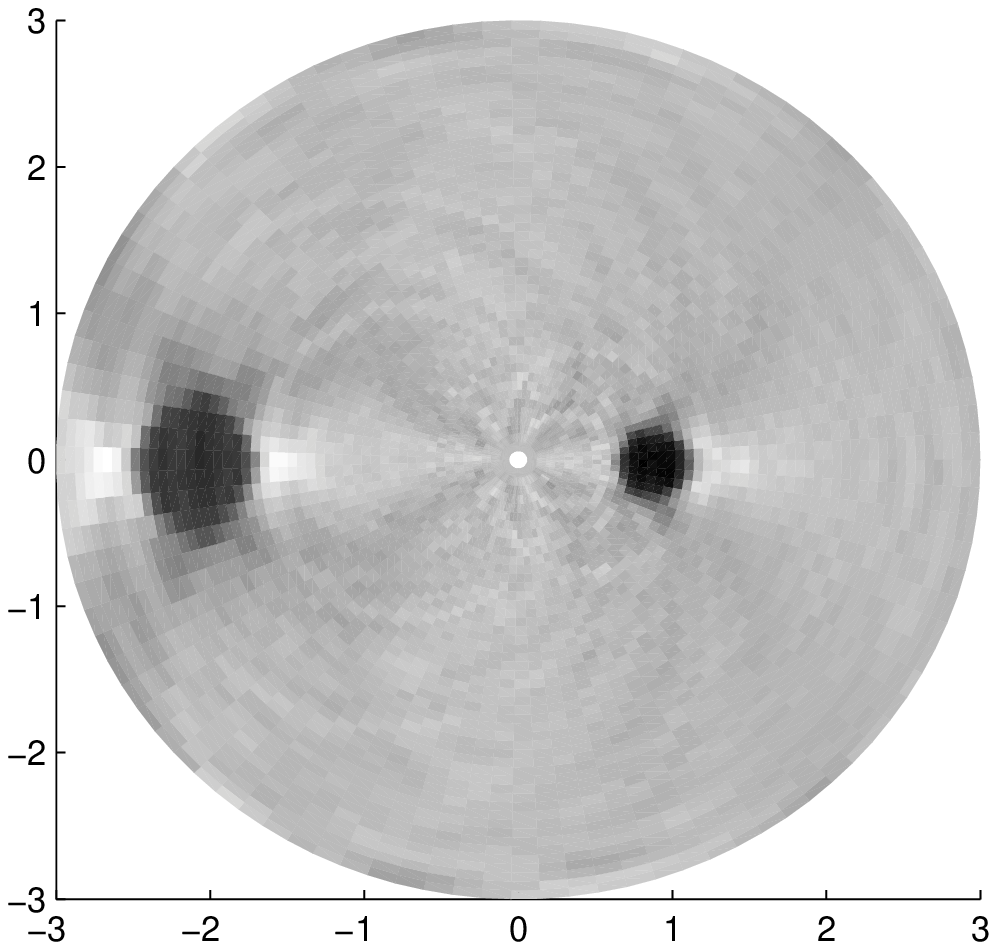}
      }\\

      \caption{Results for spherical Radon transform data for a function supported on both sides of the acquisition sphere. Figure \ref{interior_exterior_phantom_exact_hor} represents the horizontal view of the actual phantom. Figure \ref{Reconstruction_interior_exterior_hor} shows the horizontal view of the reconstructed image.}
      \label{fig:exterior2}
      \end{figure}

Figure \ref{interior_exterior_phantom_exact_hor} shows the horizontal cross section of a phantom represented by two balls centered at $(0.5,0,0)$ and $(-2.0,0,0)$ with radius 0.2 and 0.3 respectively. The balls lie on either side of the  acquisition sphere, which is centered at the origin with radius 1.49. Figure \ref{Reconstruction_interior_exterior_hor} shows the horizontal cross section of the reconstructed phantoms. Again by microlocal analysis arguments, the ball outside the acquisition sphere cannot be constructed stably whereas the ball inside the acquisition sphere can be constructed stably. This is depicted in the reconstructions.

\section{Conclusion}

We studied the problem of inverting the spherical Radon transform in spherical geometry of data acquisition with incomplete radial data. Such problems arise in image reconstruction procedures in photo- and thermo-acoustic tomography, ultrasound reflection tomography, as well as in radar and sonar imaging. We considered three distinct scenarios of the location of the support of the image function: strictly inside the acquisition sphere (interior problem), strictly outside (exterior problem), and both inside and outside (interior/exterior problem). For all three cases we provided a constructive proof of the uniqueness of inversion of SRT from incomplete radial data and obtained an iterative procedure to recover the image function. We presented a robust computational algorithm based on our inversion procedure and demonstrated its accuracy and efficiency on several numerical examples.

\section*{Acknowledgments}

Ambartsoumian was supported in part by US NSF Grants DMS 1109417, DMS 1616564 and Simons Foundation Grant 360357.

Gouia-Zarrad was supported in part by the American University of Sharjah (AUS) research grant FRG3.

Krishnan was supported in part by NSF grants DMS 1109417 and DMS 1616564. He and Roy benefited from support of the Airbus Group Corporate Foundation Chair ``Mathematics of Complex Systems'' established at TIFR Centre for Applicable Mathematics
and TIFR International Centre for Theoretical Sciences, Bangalore, India.

\end{document}